\newtheorem{defi}{Definition}[section]
\newtheorem*{conjecture}{Conjecture}
\newtheorem{lema}[defi]{Lemma}
\newtheorem{teo}[defi]{Theorem}
\newtheorem{rem}[defi]{Remark}
\newtheorem*{rem*}{Remark}
\newcommand{\C}{\mathbb{C}}
\newcommand{\Q}{\mathbb{Q}}
\newcommand{\K}{\mathbb{K}}
\newcommand{\R}{\mathbb{R}}
\newcommand{\N}{\mathbb{N}}
\newcommand{\Z}{\mathbb{Z}}
\newcommand{\1}{\mathds{1}}
\DeclareMathOperator{\spn}{span}
\newcommand{\interior}[1]{%
  {\kern0pt#1}^{\mathrm{o}}%
}
\renewcommand\eqref[1]{(\ref{#1})} 
\begin{document}

\title[The spectrum of the Vladimirov sub-Laplacian on the compact Heisenberg group ]
 {The spectrum of the Vladimirov sub-Laplacian on the compact Heisenberg group }

\author{
  J.P. Velasquez-Rodriguez
}

\newcommand{\Addresses}{{
  \bigskip
  \footnotesize

 J.P. Velasquez-Rodriguez, \textsc{Department of Mathematics: Analysis, Logic and Discrete Mathematics, Ghent University, Belgium, and Departamento de Matematicas, Universidad del Valle, Cali-Colombia}\par\nopagebreak
  \textit{E-mail address:} \texttt{juanpablo.velasquezrodriguez@ugent.be / velasquez.juan@correounivalle.edu.co}}

}


\thanks{The author is supported by the grace of God and Lord Bruce the First, ruler of the Lands Between}

\subjclass[2020]{Primary; 22E35, 58J40 ; Secondary: 20G05, 35R03, 42A16. }

\keywords{Pseudo-differential operators, p-adic Lie groups, representation theory, compact groups, Vladimirov–Taibleson operator}

\date{January 15, 2024.}
\begin{abstract}
Let $p>2$ be a prime number. In this short note, we calculate explicitly the unitary dual and the matrix coefficients of the Heisenberg group over the $p$-adic integers. As an application, we consider directional Vladimirov–Taibleson derivatives, and some polynomials in these operators. In particular, we calculate explicitly the spectrum of the Vladimirov sub-Laplacian and show how it provides a non-trivial example of a globally hypoelliptic operator on compact nilpotent $\K$-Lie groups.
\end{abstract}
\maketitle
\tableofcontents
\section{Introduction}
This paper is motivated by a principle that Harish-Chandra referred to as the ``Lefschetz principle'', which says that \emph{real
groups, $p$-adic groups and automorphic forms (corresponding to the archimedean and non-archimedean local fields, and to number fields) should be placed on an equal footing, and that ideas and results from one of these three categories should transfer to the other two} \cite{HarisLanglands}. In this context, we focus on extending certain techniques from harmonic analysis on real groups to compact $p$-adic Lie groups, a class of topological groups that has primarily been studied in the literature from an algebraic perspective. Our main concern will be Fourier analysis and the study of the Vladimirov-Taibleson operator, which parallels the fractional Laplacian in real analysis, in the context of non-commutative groups over the $p$-adic integers.  

\newpage
Let us elucidate the meaning of the phrase \emph{the extension of some techniques from Harmonic analysis on Lie groups}. This is important because the analysis on Lie groups relies on the differential structure of the group as a smooth manifold. So far, in the knowledge of the author, there is no theory of derivative, differential, or pseudo-differential operators on $p$-adic manifolds, and neither on the class of non-commutative locally profinite groups, even though there is a rich theory on $p$-adic Lie groups, algebraic groups, and $\K$-analytic manifolds. Only in recent times P.E. Bradley and collaborators started the study of pseudo-differential operators invariant under a finite group actions, heat equations on Mumford curves, and $p$-adic Laplacians \cite{Bradley3, Bradley2, bradley1}. And apart from this, to the best knowledge of the author, only Kochubei has considered equations on a $p$-adic ball \cite{Kochubei2018}, and there are some works about general ultrametric spaces \cite{Bendikov2014}. As it turns out, the common thread on the $p$-adic side of the theory is the Vladimirov–Taibleson operator, which provides a definition of ``differentiability'' for functions defined on ultrametric spaces. Most pseudo-differential equations in the literature, specially those aimed to have some real world application, are given in terms of this operator, or a similar one \cite{Khrennikov2018}, and it is commonly regarded as some kind of fractional Laplacian for complex valued functions on totally disconnected spaces \cite{Kochubei2023}, specially because, in some cases, it actually coincides with the isotropic Laplacian associated to a certain ultrametric in general ultrametric spaces, as the works of A. Bendikov, A. Grigoryan, C. Pittet and W. Woess showed. In particular, when the ultrametric space has additionally some group structure, the Vladimirov-Taibleson operator can be very nicely expressed in terms of the group’s Fourier analysis. 

 In the real case, given a partial differential operator $L$, a fundamental question in the theory of PDEs is: \emph{If $Lu = f$ and $f$ is smooth, does this imply that $u$ is smooth?} This question leads to the concept of \emph{hypoellipticity}, which is a central topic in the theory of pseudo-differential operators on smooth manifolds. In this paper, our primary focus is on $\mathbb{H}_d(\mathbb{Z}_p)$, or simply $\mathbb{H}_d$ for short, which we refer to as the $(2d+1)$-dimensional Heisenberg group over the $p$-adic integers. We define it as $\mathbb{Z}_p^{2d + 1}$ equipped with the non-commutative operation 
\[
(\mathbf{x}, \mathbf{y}, z) \star (\mathbf{x}', \mathbf{y}', z') := \left(\mathbf{x} + \mathbf{x}', \mathbf{y} + \mathbf{y}', z + z' + \mathbf{x} \cdot \mathbf{y}'\right),
\]which endows it with an analytic manifold structure over $\mathbb{Q}_p$ along with a compatible group structure, making it a (compact) $p$-adic Lie group. For this $p$-adic manifold, we aim to study the problem of global hypoellipticity using strategies analogous to those employed for compact Lie groups. The idea is straightforward: when the manifold possesses an additional group structure, we can employ the group Fourier analysis to simplify the study of linear operators, particularly those who are translation invariant. This approach proves to be very useful for investigating the problem of global hypoellipticity on compact Lie groups, as it is shown in \cite{Kirilov2020}, and we will use it here for the totally disconnected case.

In general, proving that a given partial differential operator is hypoelliptic is a non-trivial problem, and there is a substantial body of literature dedicated to addressing this question. Fortunately, there are cases where a satisfactory answer exists, as in the case of \emph{H{\"o}rmander's operators}. See \cite{Bramanti2014} and the references therein. In 1967, H{\"o}rmander proved that, under the assumption that the system of smooth vector fields $X_1, \ldots, X_\kappa$ generates the entire tangent space at any point, a condition nowadays called \emph{H{\"o}rmander's condition}, the operator
\begin{equation}
    L := \sum_{i=1}^\kappa X_i^2 + X_0 + c,
\end{equation}
is a hypoelliptic operator \cite{Hrmander1967}. An operator in this form is commonly known as a \emph{H{\"o}rmander's operator}, and by taking $X_0 =0$ and $c=0$ we get a \emph{H{\"o}rmander's} sum of squares. Important examples of H{\"o}rmander's operators are the sub-Laplacians on Lie groups \cite{Bramanti2014}. 

In this paper we consider the problem of the global hypoellipticity on compact nilpotent groups over non-archimedean local fields. Specifically, we aim to introduce the definition of \emph{directional Vladimirov–Taibleson operators} along with the polynomials associated with these operators. The Vladimirov–Taibleson operator offers a concept of differentiability for functions defined on profinite groups, and the directional VT operators bear some resemblance to directional derivatives. 

\begin{defi}\label{defiDirectionalK}
Let $\K$ be a non-archimedean local field with ring of integers $\mathscr{O}_\K$, prime ideal $\mathfrak{p}= \textbf{p} \mathscr{O}_\K$, and residue field $\mathbb{F}_q = \mathscr{O}_\K/\textbf{p} \mathscr{O}_\K.$ Let $\mathfrak{g} = \spn_{\mathscr{O}_\K} \{ X_1,..,X_d \}$ be a nilpotent $\mathscr{O}_\K$-Lie algebra, and let $\mathbb{G}$ be the exponential image of $\mathfrak{g}$, so that $\mathbb{G}$ is a compact nilpotent $\K$-Lie group. We will use the symbol symbol $\partial_{X}^\alpha$ to denote the \emph{directional Vladimirov–Taibleson operator in the direction of $X \in \mathfrak{g}$}, or directional VT operator for short, which we define as $$\partial_{X}^\alpha f(\mathbf{x}) := \frac{1 - q^\alpha}{1-q^{-(\alpha +1)}} \int_{\mathscr{O}_\K} \frac{f(\mathbf{x} \star \mathbf{exp}(tX)^{-1}) - f(\mathbf{x}) dt}{|t|_\K^{\alpha +1}}, \, \, \, \, f \in \mathcal{D}(\mathbb{G}).$$Here $\mathcal{D}(\mathbb{G})$ denotes the space of smooth functions on $\mathbb{G}$, i.e., the collection of locally constant functions with a fixed index of local constancy.   
\end{defi}

\begin{rem}
Notice how the above definition is independent of any coordinate system we choose for the group as a manifold. Even though we are using the exponential map in the definition, this does not mean we are using the exponential system of coordinates. First, $\mathbf{x} \in \mathbb{G}$ is not written in coordinates, and second, we know there is a one to one correspondence between elements of the Lie algebra $\mathfrak{g}$ and one-parameter subgroups of $\mathbb{G}$. Here a one-parameter subgroup of $\mathbb{G}$ is an analytic homomorphism $\gamma_X:\mathscr{O}_\K \to \mathbb{G}$.  Using this fact, given any $X \in \mathfrak{g}$, we can define the directional VT operator in the direction of $X \in \mathfrak{g}$ via the formula  $$\partial_{X}^\alpha f(\mathbf{x}) := \frac{1 - q^\alpha}{1-q^{-(\alpha +1)}} \int_{\mathscr{O}_\K} \frac{ f(\mathbf{x} \star \gamma_X(t)^{-1}) - f(\mathbf{x}) dt}{|t|_\K^{\alpha +1}}.$$
Since $\mathbb{G}$ is compact, we can identify it with a matrix group, where all one parameter subgroups have the form $\gamma_X (t) = e^{tX}$, which justifies our initial definition. More generally, we can write any analytic vector field on $\mathbb{G}$ as $$X(\mathbf{x}):= 
\sum_{j=1}^d c_j (\mathbf{x}) X_j,$$where the coefficient functions $c_j$ are analytic. For this vector field, the directional VT operator is defined in a similar way as $$\partial_{X}^\alpha f(\mathbf{x}) := \frac{1 - q^\alpha}{1-q^{-(\alpha +1)}} \int_{\mathscr{O}_\K} \frac{f(\mathbf{x} \star \mathbf{exp}(tX (\mathbf{x}))^{-1}) - f(\mathbf{x}) dt}{|t|_\K^{\alpha +1}}.$$However, in this work we wont consider the case where the coefficients are not constant, because such operators are not necessarily invariant.         
\end{rem}

By using Definition \ref{defcompactdirectionalVT}, we can link an specific Vladimirov-type operator to each direction $X \in \mathfrak{g}$ and subsequently study the resulting operators. While this association does not preserve the Lie algebra structure, as seen in Lie groups over the real numbers, the resulting operators are nonetheless interesting and share similarities with partial differential operators on Lie groups. We want to study polynomials in these operators and, specifically, a distinguished operator called here the \emph{Vladimirov sub-Laplacian}. For this operator, which we define in principle for compact nilpotent groups, we would like to pose the following conjecture:

\begin{conjecture}\normalfont   
 Let $\K$ be a non-archimedean local field with ring of integers $\mathscr{O}_\K$, prime ideal $\mathfrak{p}= \textbf{p} \mathscr{O}_\K$, and residue field $\mathbb{F}_q = \mathscr{O}_\K/\textbf{p} \mathscr{O}_\K.$ Let $\mathfrak{g} = \spn_{\mathscr{O}_\K} \{ X_1,..,X_d \}$ be a nilpotent $\mathscr{O}_\K$-Lie algebra, and let $\mathbb{G}$ be the exponential image of $\mathfrak{g}$, so that $\mathbb{G}$ is a compact nilpotent $\K$-Lie group. Let $X_1,...,X_{\kappa}$, $1 \leq \kappa \leq d$, be an $\mathscr{O}_\K$-basis for $\mathfrak{g}/[\mathfrak{g}, \mathfrak{g}]$, and let $\mathbb{G}$ be the exponential image of $\mathfrak{g}$. Then the \emph{Vladimirov sub-Laplacian of order $\alpha>0$} is a hypoelliptic operator on $\mathbb{G}$. Here the Vladimirov sub-Laplacian is the pseudo-differential operator $\mathscr{L}^\alpha_{sub}$, defined on the space of smooth functions $\mathcal{D}(\mathbb{G})$ via the formula $$\mathscr{L}^\alpha_{sub} f(\mathbf{x}) := \sum_{k= 1}^\kappa 
\partial_{X_k}^\alpha f(\mathbf{x}),$$where $X_1 , ...,X_\kappa $ spans $\mathfrak{g}/[\mathfrak{g} , \mathfrak{g}]$ as $\mathscr{O}_\K$-module. 
\end{conjecture}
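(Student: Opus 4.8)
The plan is to reduce the statement to a symbolic criterion via the group Fourier transform, exactly as in the compact Lie group theory of global hypoellipticity (cf. \cite{Kirilov2020}), and then to verify that criterion using the bracket-generating hypothesis on $X_1,\dots,X_\kappa$. First I would record that each directional operator $\partial_X^\alpha$ is left-invariant: since $f\mapsto f(\mathbf{x}\star\mathbf{exp}(tX)^{-1})$ is right translation along the one-parameter subgroup $\gamma_X$, the operator commutes with left translations and therefore acts on the $\xi$-isotypic component of $L^2(\mathbb{G})$, with $[\xi]$ running over the finite-dimensional unitary dual $\widehat{\mathbb{G}}$ computed earlier, by a matrix symbol. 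Applying $\partial_X^\alpha$ to the matrix coefficients of $\xi$ and using $\xi(\mathbf{x}\star\mathbf{exp}(-tX))=\xi(\mathbf{x})\,\xi(\mathbf{exp}(-tX))$ yields
\[
\widehat{\partial_X^\alpha}(\xi)=\frac{1-q^\alpha}{1-q^{-(\alpha+1)}}\int_{\mathscr{O}_\K}\frac{\xi(\mathbf{exp}(-tX))-I}{|t|_\K^{\alpha+1}}\,dt .
\]
Now $t\mapsto \xi(\mathbf{exp}(tX))$ is a unitary representation of the compact abelian group $(\mathscr{O}_\K,+)$, hence diagonalizes into additive characters $t\mapsto\chi(\lambda t)$ of $\mathscr{O}_\K$; in an adapted orthonormal basis $\widehat{\partial_X^\alpha}(\xi)$ becomes diagonal, and each diagonal entry equals the value of the one-dimensional Vladimirov–Taibleson symbol at the corresponding frequency $\lambda$, a nonnegative real number that vanishes exactly when $\chi$ is trivial and increases with the level of $\lambda$. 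Consequently every $\widehat{\partial_X^\alpha}(\xi)$ is a positive semidefinite Hermitian matrix whose kernel is the subspace $V_\xi^{\gamma_X}$ of vectors fixed by $\gamma_X(\mathscr{O}_\K)$.

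Next I would invoke (or, in the $p$-adic setting, first establish) the standard characterization: an invariant operator is globally hypoelliptic on $\mathbb{G}$ if and only if its symbol $\widehat{L}(\xi)$ is invertible for all but finitely many $[\xi]\in\widehat{\mathbb{G}}$ and the inverses obey a polynomial bound $\|\widehat{L}(\xi)^{-1}\|_{\mathrm{op}}\le C\,\langle\xi\rangle^{M}$, where $\langle\xi\rangle$ is a weight measuring the level of $\xi$ (the least $n$ for which $\xi$ is trivial on the congruence subgroup $\mathbb{G}_n$). Since the sub-Laplacian has symbol $\widehat{\mathscr{L}^\alpha_{sub}}(\xi)=\sum_{k=1}^\kappa\widehat{\partial_{X_k}^\alpha}(\xi)$, a sum of positive semidefinite matrices, its kernel is the intersection $\bigcap_{k}\ker\widehat{\partial_{X_k}^\alpha}(\xi)=\bigcap_k V_\xi^{\gamma_{X_k}}$, i.e. the subspace of vectors fixed by every $\gamma_{X_k}(\mathscr{O}_\K)$.

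The qualitative half of the criterion then follows from the H\"ormander/bracket-generating hypothesis. Because $X_1,\dots,X_\kappa$ span $\mathfrak{g}/[\mathfrak{g},\mathfrak{g}]$ and $\mathfrak{g}$ is nilpotent, iterated commutators of the $\mathbf{exp}(tX_k)$ (via the Baker–Campbell–Hausdorff formula) topologically generate $\mathbb{G}$; a vector fixed by all the $\gamma_{X_k}$ is therefore $\mathbb{G}$-invariant, which for an irreducible $\xi$ forces $\xi$ to be trivial. Hence $\widehat{\mathscr{L}^\alpha_{sub}}(\xi)$ is invertible for every nontrivial $[\xi]$, and only the trivial representation is exceptional.

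The real work, and the step I expect to be the main obstacle, is the quantitative bound demanded by the criterion: one must show $\|\widehat{\mathscr{L}^\alpha_{sub}}(\xi)^{-1}\|_{\mathrm{op}}\le C\langle\xi\rangle^{M}$, i.e. that the smallest eigenvalue of $\sum_k\widehat{\partial_{X_k}^\alpha}(\xi)$ does not decay faster than a fixed power of $\langle\xi\rangle^{-1}$. Because the $X_k$ do not commute, the summands are not simultaneously diagonalizable, so the bracket-generating condition yields positivity but not, by itself, a controlled lower bound on the minimal eigenvalue. For $\mathbb{H}_d$ I would obtain this bound by the explicit route promised in the abstract: using the computed unitary dual and matrix coefficients, diagonalize $\widehat{\mathscr{L}^\alpha_{sub}}(\xi)$ on each Schr\"odinger-type representation indexed by its central character, read off the spectrum, and verify directly that the nonzero eigenvalues satisfy the required lower bound, so that only the trivial representation is genuinely exceptional. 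Producing such an estimate for an arbitrary compact nilpotent $\K$-Lie group by structural means, rather than by case-by-case spectral computation, is exactly what keeps the general statement at the level of a conjecture.
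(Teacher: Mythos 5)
The statement you were given is posed in the paper only as a conjecture, and the paper supplies no proof of it in this generality: what it actually establishes (Theorem \ref{TeoSpectrumSublaplacianHd}) is the single case $\mathbb{G}=\mathbb{H}_d(\Z_p)$, by explicitly computing the unitary dual and matrix coefficients, reading off the spectrum of the symbol on each representation space, and then invoking the Fourier-multiplier criterion of Lemma \ref{Teohypoellipticity}. Your proposal should therefore be judged as a strategy rather than a proof, and you present it as such. Within those limits it is sound, and for the Heisenberg case it coincides with the paper's route (explicit dual, diagonalization of the symbol, verification of the lower bound of Lemma \ref{Teohypoellipticity}). The qualitative argument you add for a general compact nilpotent $\mathbb{G}$ --- each $\widehat{\partial_{X_k}^\alpha}(\xi)$ is positive semidefinite with kernel equal to the subspace of $\gamma_{X_k}(\mathscr{O}_\K)$-fixed vectors, so the kernel of the sum is the space of vectors fixed by the closed subgroup generated by all the $\gamma_{X_k}$, which is all of $\mathbb{G}$ because the $X_k$ generate the abelianization of a nilpotent group --- is correct and goes beyond anything written in the paper; it gives invertibility of $\widehat{\mathscr{L}^\alpha_{sub}}(\xi)$ at every nontrivial $[\xi]$. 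You are also right about where the argument stops: the criterion requires a uniform polynomial lower bound on $\|\sigma(\xi)\|_{inf}$ as the level of $\xi$ grows, pointwise invertibility only yields a positive minimum on each finite set $B_{\widehat{\mathbb{G}}}(n)$, and neither your write-up nor the paper offers a structural control of how that minimum decays with $n$ outside of the explicitly computed Heisenberg case. So the proposal correctly isolates the open quantitative step, does not close it, and is consistent with the statement remaining a conjecture; if you want to turn the Heisenberg portion into a complete argument you must carry out the spectral computation in full, since that computation (not the reduction to the symbol criterion) is where all the work lies.
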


\begin{rem}\normalfont
    To the knowledge of the author, an operator like the Vladimirov sub-Laplacian has not appeared before in the mathematical literature. The closest thing is probably the study of the \emph{Vladimirov-Laplacian} by Bendikov,  Grigoryan,  Pittet,  and Woess \cite{Bendikov2014}. There the authors studied the operators $$\mathscr{L}^\alpha f(x) := \sum_{k= 1}^d 
\partial_{X_k}^\alpha f(x),$$as the generators of a Dirichlet form associated with a certain jump kernel. See \cite[Section 5]{Bendikov2014} for more details. 
\end{rem}

So, drawing an analogy between the real case and the non-archimedean case, we can think on the Vladimirov sub-Laplacian as an analog of the sub-Laplacian or H{\"o}rmander's sum of squares for the non-archimedean case. Consequently, it is reasonable to expect the global hypoellipticity for these operators, and the main goal of this paper is to demonstrate that this is indeed the case for the Heisenberg group over the $p$-adic integers, denoted  by $\mathbb{H}_d(\Z_p)$.

As the main results of this paper, for the particular case $\K=\Q_p$ and $\mathbb{G}=\mathbb{H}_d (\Z_p)$, we will provide a complete description of the unitary dual, the matrix coefficients of the representations, and the spectrum of the Vladimirov sub-Laplacian on $\mathbb{H}_d$. To be more precise, our goal in this work is to establish the following two results, which justify our conjecture about sub-Laplacians on more general nilpotent groups.  
 \begin{rem}
    In this paper, we will identify each equivalence class $\lambda$ in $\widehat{\Z}_p \cong \Q_p / \Z_p$, with its associated representative in the complete system of representatives $$\{1\} \cup \big\{ \sum_{k =1}^\infty \lambda_k p^{-k} \, : \, \, \text{only finitely many $\lambda_k$ are non-zero.} \big\}.$$Similarly, every time we consider an element of the quotients $\Q_p / p^{-n} \Z_p$ it will be chosen from the complete system of representatives   $$\{1\} \cup \big\{ \sum_{k =n+1}^\infty \lambda_k p^{-k} \, : \, \text{only finitely many $\lambda_k$ are non-zero} \big\}.$$Also, for any $p$-adic number we will write $\vartheta(\lambda)$ for the $p$-adic valuation of $\lambda \in \Q_p$.
\end{rem}
Using the conventions introduced above, we have the following explicit description of the unitary dual of $\mathbb{H}_d$.

\begin{teo}\label{TeoRepresentationsHd}
Let us denote by $\widehat{\mathbb{H}}_d$ the unitary dual of $\mathbb{H}_d$, i.e., the collection of all equivalence classes of unitary irreducible representations of $\mathbb{H}_d$. Then we can identify $\widehat{\mathbb{H}}_d$ with the following subset of $\widehat{\Z}_p^{2d+1} \cong \Q_p^{2d+1}/\Z_p^{2d+1}$: $$\widehat{\mathbb{H}}_d = \{(\xi , \eta, \lambda ) \in  \widehat{\Z}_p^{2d+1} \, : (\xi , \eta) \in \Q_p^{2d} / p^{\vartheta(\lambda)} \Z_p^{2d} \}.$$Moreover, each non-trivial representation $[\pi_{(\xi, \eta, \lambda)}] \in \widehat{\mathbb{H}}_d $ can be realized in the finite dimensional sub-space $\mathcal{H}_\lambda$ of $L^2(\Z_p^d)$ defined as
$$\mathcal{H}_\lambda := \spn_\C \{ \varphi_h \, : \, h \in \Z_p^d / p^{-\vartheta(\lambda)}\Z_p^d  \}, \, \, \, \varphi_h (u) := |\lambda|_p^{d/2} \1_{h + p^{-\vartheta(\lambda)} \Z_p^d} (u), \, \, \dim(\mathcal{H}_\lambda) =|\lambda|_p^d,$$
where the representation acts on functions $\varphi \in \mathcal{H}_\lambda$ according to the formula 
$$\pi_{(\xi , \eta , \lambda)}(\mathbf{x}, \mathbf{y}, z) \varphi (u) := e^{2 \pi i \{\xi \cdot \mathbf{x} + \eta \cdot \mathbf{y} + \lambda (z +  u \cdot \mathbf{y}) \}_p} \varphi (u + \mathbf{x}), \, \, \, \, \varphi \in \mathcal{H}_\lambda.$$With this explicit realization, and by choosing the basis $\{ \varphi_h\,: \, h \in \Z_p^d / p^{-\vartheta(\lambda)}\Z_p^d\}$ for each representation space, the associated matrix coefficients are given by $$(\pi_{(\xi , \eta , \lambda)})_{h h'}(\mathbf{x}, \mathbf{y},z)=e^{ 2 \pi i \{ (\xi \cdot \mathbf{x} + \eta \cdot \mathbf{y}) +  \lambda(z + h' \cdot \mathbf{y} )  \}_p} \1_{h' - h + p^{-\vartheta(\lambda)}  \Z_p^d}(\mathbf{x}) ,$$and the corresponding characters are $$\chi_{\pi_{(\xi, \eta, \lambda)}} (\mathbf{x}, \mathbf{y}, z) = e^{2 \pi i \{ \xi \cdot \mathbf{x} + \eta \cdot \mathbf{y} + \lambda z \}_p} \1_{p^{-\vartheta(\lambda)} \Z_p^d} (\mathbf{x} ) \1_{p^{-\vartheta(\lambda)} \Z_p^d}(\mathbf{y}).$$Sometimes we will use the notation $$\mathcal{V}_{(\xi , \eta , \lambda)}:= Span_\C \{ (\pi_{(\xi , \eta , \lambda)})_{h h'} \, : \, h,h' \in \Z_p^d / p^{-\vartheta(\lambda)} \Z_p^d \}.$$
\end{teo}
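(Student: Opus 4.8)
The plan is to realize $\mathbb{H}_d$ as the profinite limit $\varprojlim_n \mathbb{H}_d(\Z/p^n\Z)$, verify the three explicit formulas by hand, and defer the exhaustion claim to a dimension count carried out at each finite level. First I would record the elementary structure theory: a direct computation with the group law shows that the center and the commutator subgroup coincide and both equal $Z=\{(\mathbf{0},\mathbf{0},z):z\in\Z_p\}\cong\Z_p$, so the abelianization is $\Z_p^{2d}$ and the unitary characters of $Z$ are exactly $z\mapsto e^{2\pi i\{\lambda z\}_p}$ with $\lambda\in\Q_p/\Z_p$. This isolates $\lambda$ as the central parameter and identifies the one-dimensional representations, those factoring through the abelianization, with the case where $\lambda$ is trivial (so $\vartheta(\lambda)=0$ and $(\xi,\eta)$ runs over the full dual $\widehat{\Z_p^{2d}}=\Q_p^{2d}/\Z_p^{2d}$).

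Next I would check that each $\pi_{(\xi,\eta,\lambda)}$ is a genuine unitary representation on $\mathcal{H}_\lambda$. Unitarity is immediate, since each operator is a unimodular multiplier followed by the Haar-preserving translation $u\mapsto u+\mathbf{x}$. The homomorphism property $\pi(g)\pi(g')=\pi(g\star g')$ follows by substituting the group law and using additivity of the $p$-adic fractional part; the cocycle term $\mathbf{x}\cdot\mathbf{y}'$ produced by the product is matched exactly by the phase $\lambda\,u\cdot\mathbf{y}$ evaluated at the translated argument $u+\mathbf{x}$. Invariance of $\mathcal{H}_\lambda$ and the value $\dim\mathcal{H}_\lambda=|\lambda|_p^d$ follow because, on the support of $\varphi_h$, the modulation $e^{2\pi i\{\lambda u\cdot\mathbf{y}\}_p}$ is constant (here one uses $\lambda p^{-\vartheta(\lambda)}\in\Z_p^\times$), so $\pi(g)$ merely permutes the basis $\{\varphi_h\}$ up to phase. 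Evaluating $\langle\pi(g)\varphi_{h'},\varphi_h\rangle$ then produces the matrix-coefficient formula, the indicator $\1_{h'-h+p^{-\vartheta(\lambda)}\Z_p^d}(\mathbf{x})$ recording precisely the requirement that translation by $\mathbf{x}$ send the coset of $h'$ to that of $h$; summing the diagonal gives the character.

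The core of the argument is then purely character-theoretic. I would prove irreducibility by checking $\int_{\mathbb{H}_d}|\chi_{\pi_{(\xi,\eta,\lambda)}}(g)|^2\,dg=1$ for the normalized Haar measure, which, thanks to the support factor $\1_{p^{-\vartheta(\lambda)}\Z_p^d}(\mathbf{x})\1_{p^{-\vartheta(\lambda)}\Z_p^d}(\mathbf{y})$, collapses to an elementary volume computation. The same character formula simultaneously fixes the parametrization: on its support one has $\mathbf{x},\mathbf{y}\in p^{-\vartheta(\lambda)}\Z_p^d$, so the phase $e^{2\pi i\{\xi\cdot\mathbf{x}+\eta\cdot\mathbf{y}\}_p}$ is unaffected by translating $(\xi,\eta)$ by $p^{\vartheta(\lambda)}\Z_p^{2d}$, whence $\chi$ depends on $(\xi,\eta)$ only through its class in $\Q_p^{2d}/p^{\vartheta(\lambda)}\Z_p^{2d}$, while distinct classes plainly give distinct characters. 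Since on a compact group two irreducibles are equivalent exactly when their characters agree, this yields in one stroke both the claimed equivalences and the pairwise inequivalence of the listed representations.

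Finally, for exhaustion I would use that every continuous irreducible representation of the profinite group $\mathbb{H}_d$ has open kernel and hence factors through some $\mathbb{H}_d(\Z/p^n\Z)$. It thus suffices to show that the members of the list which descend to level $n$ exhaust the group algebra, i.e.\ that $\sum\dim(\pi)^2=|\mathbb{H}_d(\Z/p^n\Z)|=p^{(2d+1)n}$; grouping the terms according to $m=-\vartheta(\lambda)\in\{0,1,\dots,n\}$, counting the admissible $(\xi,\eta)$ at level $n$, and summing the resulting geometric series should telescope precisely to $p^{(2d+1)n}$, so that Peter--Weyl for the finite quotient forces completeness at every level, and taking the union over $n$ identifies $\widehat{\mathbb{H}}_d$. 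I expect the genuine obstacle to lie not in any single computation but in the bookkeeping of the parametrization and the exhaustion: one must verify that the ``lower order'' twists $(\xi,\eta)$ are counted modulo \emph{exactly} $p^{\vartheta(\lambda)}\Z_p^{2d}$, neither more nor less, and that the level-$n$ dimension count closes up exactly, since an error in either place would leave the dual either overcounted or incomplete.
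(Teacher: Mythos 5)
Your proposal is correct and follows essentially the same route as the paper: explicit realization of $\pi_{(\xi,\eta,\lambda)}$ on $\mathcal{H}_\lambda$, irreducibility via $\|\chi_\pi\|_{L^2}=1$, equivalence classes read off from the characters (which is how the quotient $\Q_p^{2d}/p^{\vartheta(\lambda)}\Z_p^{2d}$ arises in both arguments), and exhaustion by the Peter--Weyl dimension count $\sum d_\pi^2=|\mathbb{H}_d(\Z/p^n\Z)|=p^{(2d+1)n}$ on each finite quotient. The only cosmetic difference is that you organize the argument around the center/abelianization up front and verify the homomorphism property explicitly, whereas the paper builds up through the cases $n=0,1,2$ before stating the general count.
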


\begin{rem}\label{remrepres}
The representation theory of a group like $\mathbb{H}_d$ has been considered before in \cite{Boyarchenko2008, Howe1977KirillovTF} where more general classes of groups are studied. However, in the knowledge of the author, there is no general method available in the literature to obtain explicitly the matrix coefficients of the representations, even for very simple cases like the Heisenberg group.  
\end{rem}

Once we have the description of the unitary dual given in Theorem \ref{TeoRepresentationsHd}, we can prove the following spectral theorem for the Vladimirov sub-Laplacian on $\mathbb{H}_d$:

\begin{teo}\label{TeoSpectrumSublaplacianHd}
Let $\mathfrak{h}_d$ be the $(2d+1)$-dimensional Heisenberg Lie algebra, with generators $$\{ X_1,...,X_d,Y_1,...,Y_d,Z\}, \, \, \, [X_i, Y_j] = \delta_{ij} Z,$$ and let $\textbf{V} = \{V_1,...,V_d \} \subset Span_{\Z_p} \{ X_1,...,X_d\}, \,\, \textbf{W} =  \{W_1,...,W_d \} \subset Span_{\Z_p} \{ Y_1,...,Y_d\}$ be collections of linearly independent vectors. The Vladimirov sub-Laplacian associated to this collection  $$T_{\textbf{V}, \textbf{W}}^{\alpha } : = \sum_{k=1}^\varkappa \partial_{V_k}^{\alpha} + \partial_{W_k}^{\alpha} ,$$defines a left-invariant, self-adjoint operator on $\mathbb{H}_d$, which is also globally hypoelliptic. The spectrum of this operator is purely punctual, and its associated eigenfunctions form an orthonormal basis of $L^2(\mathbb{H}_d)$. Furthermore, the symbol of $T_{\textbf{V}, \textbf{W}}^{\alpha }$ acts on each representation space as a $p$-adic Schr{\"o}dinger operator, and the space $L^2(\mathbb{H}_d)$ can be written as the direct sum $$L^2(\mathbb{H}_d) = \overline{\bigoplus_{(\xi,\eta,\lambda) \in \widehat{\mathbb{H}}_d} \bigoplus_{h' \in  \Z_p^d / p^{-\vartheta(\lambda)} \Z_p^d} \mathcal{V}_{(\xi , \eta, \lambda)}^{h'}}, \, \,\, \mathcal{V}_{(\xi , \eta , \lambda)} = \bigoplus_{h' \in  \Z_p^d / p^{-\vartheta(\lambda)} \Z_p^d} \mathcal{V}_{(\xi , \eta, \lambda)}^{h'}, $$where each finite-dimensional sub-space$$\mathcal{V}_{(\xi , \eta, \lambda)}^{h'}:= \mathrm{span}_\C \{ (\pi_{(\xi , \eta , \lambda)})_{hh'} \, : \, h \in \Z_p^d / p^{-\vartheta(\lambda)} \Z_p^d \},$$is an invariant sub-space where $T_{\textbf{V}, \textbf{W}}^{\alpha }$ acts like the Schr{\"o}dinger-type operator operator $$ \sum_{k=1}^d \partial_{V_k}^{\alpha} + |W_k \cdot ( \lambda  h' + \eta ) |_p^{\alpha} - \frac{1 - p^{-1}}{1 - p^{-(\alpha +1)}}.$$
Consequently, the spectrum of $T_{\textbf{V}, \textbf{W}}^{\alpha }$ restricted to $\mathcal{V}_{(\xi , \eta, \lambda)}^{h'}$ is given by $$Spec(T_{\textbf{V}, \textbf{W}}^{\alpha }|_{\mathcal{V}_{(\xi , \eta, \lambda)}^{h}})= \Big\{ \sum_{k=1}^d |V_k \cdot(\tau + \xi)|_p^\alpha + |W_k \cdot ( \lambda  h' + \eta ) |_p^{\alpha} - 2\frac{1 - p^{-1}}{1 - p^{-(\alpha +1)}} \, : \, 1 \leq\| \tau \|_p \leq |\lambda|_p \Big\},$$
so that $\mathrm{Spec}(T_{\textbf{V}, \textbf{W}}^{\alpha })$ is going to be the collection of real numbers \begin{align*}
    \sum_{k=1}^d |V_k \cdot(\tau + \xi)|_p^\alpha + |W_k \cdot ( \lambda  h' + \eta ) |_p^{\alpha} - 2\frac{1 - p^{-1}}{1 - p^{-(\alpha +1)}},
\end{align*}where $(\xi, \eta, \lambda) \in \widehat{\mathbb{H}}_d, \, h' \in \Z_p^d / p^{-\vartheta(\lambda)} \Z_p^d , \,  1 \leq \| \tau \|_p \leq |\lambda|_p$, and the corresponding eigenfunctions are given by  $$e^{ 2 \pi i \{ \lambda(z +  h' \cdot \mathbf{y} ) + ( (\xi+\tau) \cdot \mathbf{x} +  \eta \cdot \mathbf{y}) \}_p}, \, \, \, \,(\xi, \eta, \lambda) \in \widehat{\mathbb{H}}_d, \, h' \in \Z_p^d / p^{-\vartheta(\lambda)} \Z_p^d , \,  1 \leq \| \tau \|_p \leq |\lambda|_p.$$
\end{teo}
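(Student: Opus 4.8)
The plan is to reduce everything to the non-commutative Fourier analysis supplied by Theorem \ref{TeoRepresentationsHd} and to diagonalise $T_{\mathbf{V},\mathbf{W}}^\alpha$ representation by representation. First I would record the two structural facts that make this possible: since each $\partial_{X}^\alpha$ is an average of the right translations $f\mapsto f(\cdot\star\exp(tX)^{-1})$ against the measure $dt/|t|_p^{\alpha+1}$ on $\mathbb{Z}_p$, the operator $T_{\mathbf{V},\mathbf{W}}^\alpha$ commutes with all left translations (left-invariance) and is symmetric because $t\mapsto -t$ preserves the measure and sends $\exp(tX)^{-1}$ to $\exp(tX)$ (self-adjointness on $\mathcal{D}(\mathbb{H}_d)$, extending to a self-adjoint operator). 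By Peter--Weyl together with the explicit dual, $L^2(\mathbb{H}_d)$ splits into the finite-dimensional isotypic blocks $\mathcal{V}_{(\xi,\eta,\lambda)}$, and a left-invariant operator preserves each block and acts there through its matrix symbol $\sigma_T(\pi_{(\xi,\eta,\lambda)})$ on $\mathcal{H}_\lambda$; this already gives that the spectrum is purely punctual with an orthonormal eigenbasis, once the symbol is diagonalised.

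Next I would compute the symbol directly from the realisation in Theorem \ref{TeoRepresentationsHd}. Writing $\exp(tV_k)^{-1}=(-tV_k,0,0)$ and $\exp(tW_k)^{-1}=(0,-tW_k,0)$, the formula for $\pi_{(\xi,\eta,\lambda)}$ shows that $V_k$ acts by the twisted translation $\varphi(u)\mapsto e^{-2\pi i\{t\,\xi\cdot V_k\}_p}\varphi(u-tV_k)$ while $W_k$ acts by the multiplication $\varphi(u)\mapsto e^{-2\pi i\{t(\eta+\lambda u)\cdot W_k\}_p}\varphi(u)$; here one uses that $\lambda p^{-\vartheta(\lambda)}$ is a unit, so the $W_k$-phase is constant on each coset $h+p^{-\vartheta(\lambda)}\Z_p^d$ and the action is genuinely diagonal on the basis $\{\varphi_h\}$. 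Consequently $\partial_{V_k}^\alpha$ becomes a Fourier multiplier (diagonal on the additive characters $e_\tau(u)=e^{2\pi i\{\tau\cdot u\}_p}$ of $\Z_p^d/p^{-\vartheta(\lambda)}\Z_p^d$) and $\partial_{W_k}^\alpha$ a multiplication operator (diagonal on $\{\varphi_h\}$). The single computation underlying both is the scalar integral
\begin{equation*}
J(\mu)=\frac{1-p^\alpha}{1-p^{-(\alpha+1)}}\int_{\Z_p}\frac{e^{-2\pi i\{t\mu\}_p}-1}{|t|_p^{\alpha+1}}\,dt=|\mu|_p^\alpha-C,\qquad C:=\frac{1-p^{-1}}{1-p^{-(\alpha+1)}},
\end{equation*}
which I would establish by splitting $\Z_p$ into the valuation shells $|t|_p=p^{-j}$ and using the orthogonality $\int_{p^j\Z_p}e^{-2\pi i\{t\mu\}_p}dt=p^{-j}\1[\,|\mu|_p\le p^{j}\,]$. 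This yields the eigenvalues $|(\xi+\tau)\cdot V_k|_p^\alpha$ and $|(\eta+\lambda h)\cdot W_k|_p^\alpha$ (each shifted by $-C$) and identifies $\sigma_T(\pi)$ as the advertised $p$-adic Schr\"odinger operator: a kinetic Fourier-multiplier coming from $\mathbf{V}$ plus a potential coming from $\mathbf{W}$.

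With the symbol in hand, the last step is to diagonalise it and read off the spectrum and eigenfunctions. I would produce the candidate eigenfunctions $e^{2\pi i\{\lambda(z+h'\cdot\mathbf{y})+((\xi+\tau)\cdot\mathbf{x}+\eta\cdot\mathbf{y})\}_p}$ and verify that $\partial_{V_k}^\alpha$ returns the clean factor $|(\xi+\tau)\cdot V_k|_p^\alpha-C$, then organise the matrix coefficients into the spaces $\mathcal{V}_{(\xi,\eta,\lambda)}^{h'}$ on which the $\mathbf{W}$-part reduces to the constant $\sum_k|(\lambda h'+\eta)\cdot W_k|_p^\alpha-C$ (using that on the support of $(\pi)_{hh'}$ the value $\lambda\mathbf{x}\cdot W_k$ is frozen modulo $\Z_p$); collecting over $(\xi,\eta,\lambda)$, over $h'$, and over $1\le\|\tau\|_p\le|\lambda|_p$ gives both the decomposition of $L^2(\mathbb{H}_d)$ and the spectrum, with completeness guaranteed by Peter--Weyl. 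Global hypoellipticity then follows from the Fourier/symbol criterion as in \cite{Kirilov2020}: the explicit eigenvalues are bounded away from $0$ outside a controlled set and the inverse symbol grows at most polynomially in the representation parameters $(|\lambda|_p,\|\tau\|_p)$, because $|\cdot|_p^\alpha$ takes values in $p^{\alpha\Z}$.

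The main obstacle, and the point I would scrutinise hardest, is precisely the diagonalisation in the last step: the kinetic part $\sum_k\partial_{V_k}^\alpha$ is diagonal in the momentum basis $\{e_\tau\}$, whereas the potential $\sum_k\partial_{W_k}^\alpha$ is diagonal in the position basis $\{\varphi_h\}$, and the two bases are conjugate under the finite Fourier transform on $\Z_p^d/p^{-\vartheta(\lambda)}\Z_p^d$. The product form of the spectrum asserted in the theorem therefore hinges on showing that, on each $\mathcal{H}_\lambda$, the $\mathbf{V}$- and $\mathbf{W}$-parts of the symbol can be \emph{simultaneously} resolved, so that the eigenvalues are genuinely the sums $\sum_k|(\tau+\xi)\cdot V_k|_p^\alpha+|(\lambda h'+\eta)\cdot W_k|_p^\alpha-2C$ and not the spectrum of a truly coupled operator. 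Concretely I would isolate the commutator $[\partial_{V_k}^\alpha,\partial_{W_j}^\alpha]$ on $\mathcal{H}_\lambda$ --- which is governed by the pairing $V_k\cdot W_j$, i.e.\ by the structure constant $[V_k,W_j]=(V_k\cdot W_j)Z$ --- and determine, using the ultrametric inequality and the coset-constancy of the $W$-phase, whether it obstructs the simultaneous diagonalisation. This verification is the crux on which the explicit spectrum and eigenbasis stand or fall, and it is the step I would expect to demand the most care.
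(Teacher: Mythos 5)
Your outline follows the same route as the paper's own argument: compute the symbols of $\partial_{V_k}^{\alpha}$ and $\partial_{W_k}^{\alpha}$ in the realization of Theorem \ref{TeoRepresentationsHd}, split $L^2(\mathbb{H}_d)$ into the right-translation-invariant column spaces $\mathcal{V}_{(\xi,\eta,\lambda)}^{h'}$, and diagonalise block by block; the parts you actually establish (left-invariance, symmetry, pure point spectrum, the Peter--Weyl decomposition, the identification of the symbol as kinetic-plus-potential) are correct and coincide with the paper. But the step you defer to the very end --- whether the $\mathbf{V}$-part and the $\mathbf{W}$-part can be \emph{simultaneously} resolved on $\mathcal{H}_\lambda$ --- is not a verification to be postponed: it is the entire content of the explicit spectrum, the paper passes over it by silently replacing the potential with a constant, and when you carry out the check it fails. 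Since $(\mathbf{x},\mathbf{y},z)\star(0,-tW_k,0)=(\mathbf{x},\mathbf{y}-tW_k,z-t\,\mathbf{x}\cdot W_k)$ and on the support of $(\pi_{(\xi,\eta,\lambda)})_{hh'}$ the value of $\lambda\mathbf{x}$ is frozen at $\lambda(h-h')$ modulo $\Z_p^d$, one finds
\begin{equation*}
\partial_{W_k}^{\alpha}(\pi_{(\xi,\eta,\lambda)})_{hh'}=\Bigl(|W_k\cdot(\lambda h+\eta)|_p^{\alpha}-\tfrac{1-p^{-1}}{1-p^{-(\alpha+1)}}\Bigr)(\pi_{(\xi,\eta,\lambda)})_{hh'},
\end{equation*}
with the \emph{running} index $h$ --- in agreement with the paper's own diagonal symbol $\sigma_{\partial_{W_k}^{\alpha}}(\xi,\eta,\lambda)_{hh}$. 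Hence on $\mathcal{V}_{(\xi,\eta,\lambda)}^{h'}$ the potential is a genuinely non-constant multiplication operator in the position variable, not the scalar $|W_k\cdot(\lambda h'+\eta)|_p^{\alpha}-C$; the sentence in your third paragraph asserting that the $\mathbf{W}$-part ``reduces to the constant'' reproduces the paper's error, and your candidate exponentials diagonalise only the $\mathbf{V}$-part.

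The commutator you propose to isolate is indeed the obstruction, and it does not vanish. The simplest instance: take $d=1$, $V_1=X_1$, $W_1=Y_1$, $|\lambda|_p=p$, $\xi,\eta$ trivial, $h'=0$, $\tau=0$. The advertised eigenfunction is $f=e^{2\pi i\{\lambda z\}_p}$, yet $\partial_{Y_1}^{\alpha}f=\bigl(p^{\alpha}-\tfrac{1-p^{-1}}{1-p^{-(\alpha+1)}}\bigr)\1_{\Z_p\setminus p\Z_p}(\mathbf{x})\,f$, which is not a scalar multiple of $f$. More quantitatively, on $\mathcal{V}_{(1,1,\lambda)}^{0}\cong\C^{p}$ the operator is $A(I-P_0)+A(I-Q_0)$ with $A=p^{\alpha}-\tfrac{1-p^{-1}}{1-p^{-(\alpha+1)}}$, where $P_0$ and $Q_0$ are rank-one projections conjugate under the discrete Fourier transform on $\Z_p/p\Z_p$; for $p=3$ its eigenvalues are $2A$ and $A\bigl(1\pm\tfrac{1}{\sqrt{3}}\bigr)$, not the advertised $\{0,A,A\}$ (even the traces disagree). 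So the explicit spectrum and eigenbasis claimed in the theorem do not follow from this approach --- neither from your proposal nor from the paper's proof --- and the restriction of $T_{\mathbf{V},\mathbf{W}}^{\alpha}$ to each $\mathcal{V}_{(\xi,\eta,\lambda)}^{h'}$ must be treated as a genuinely coupled $p$-adic Schr\"odinger operator whose diagonalisation remains to be done.
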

\begin{rem}
    Here a Schr{\"o}dinger-type operator is simply an operator of the form $D^\alpha + V$, where $D^\alpha$ is some Vladimirov-type operator and $V$ is a potential.   
\end{rem}

\section{Preliminaries}
\subsection{The field of $p$-adic numbers $\Q_p$}
Throughout this article $p>2$ will always denote a fixed prime number. The field of $p$-adic numbers, usually denoted by $\Q_p$, can be defined as the completion of the field of rational numbers $\Q$ with respect to the $p$-adic norm $|\cdot|_p$, defined as \[|u|_p := \begin{cases}
0 & \, \text{if} \, u=0, \\ p^{-l} & \, \text{if} \, u= p^{l} \frac{a}{b},
\end{cases}\]where $a$ and $b$ are integers coprime with $p$. The integer $l:= \vartheta(u)$, with $\vartheta(0) := + \infty$, is called the \emph{$p$-adic valuation} of $u$. The unit ball of $\Q_p^d$ with the $p$-adic norm $$\|u \|_p:=\max_{1 \leq j \leq d} |u_j|_p,$$ is called the group of $p$-adic integers, and it will be denoted by $\Z_p^d$. Any $p$-adic number $u \neq 0$ has a unique expansion of the form $$u = p^{\vartheta(u)} \sum_{j=0}^{\infty} u_j p^j,$$where $u_j \in \{0,1,...,p-1\}$ and $u_0 \neq 0$. With this expansion we define the fractional part of $u \in \Q_p$, denoted by $\{u\}_p$, as the rational number\[\{u\}_p := \begin{cases}
0 & \, \text{if} \, u=0 \, \text{or} \, \vartheta(u) \geq 0, \\ p^{\vartheta(u)} \sum_{j=0}^{-\vartheta(u)-1} u_j p^j,& \, \text{if} \, \vartheta(u) <0.
\end{cases}\] $\Z_p^d$ is  compact, totally disconected, i.e. profinite, and abelian. Its dual group in the sense of Pontryagin, the collection of characters of $\Z_p^d$, will be denoted by $\widehat{\Z}_p^d$. The dual group of the $p$-adic integers is known to be the Pr{\"u}fer group $\Z (p^{\infty})$,  the unique $p$-group in which every element has $p$ different $p$-th roots. The Pr{\"u}fer group may be identified with the quotient group $\Q_p/\Z_p$. In this way, the characters of the group $\Z_p^d$ may be written as $$\chi_p (\tau  u) := e^{2 \pi i \{\tau \cdot u \}_p}, \, \, u \in \Z_p^d, \, \tau \in \widehat{\Z}_p^d 
\cong \Q_p^d / \Z_p^d .$$

\begin{rem}
    Along this work, it will be convenient at several points to identify the elements of $\widehat{\Z}_p$, which are equivalence classes $[\tau] \in \Q_p / \Z_p$, with the representative of the class $\tau = \sum_{k=1}^\infty \tau_k p^{-k},$ where only finitely many $\tau_k \in \mathbb{F}_p$ are different from zero.   
\end{rem}

By the Peter–Weyl theorem, the elements of $\widehat{\Z}_p^d$ constitute an orthonormal basis for the Hilbert space $L^2 (\Z_p^d)$, which provide us a Fourier analysis for suitable functions defined on $\Z_p$ in such a way that the formula $$\varphi(u) = \sum_{\tau \in \widehat{\Z}_p^d} \widehat{\varphi}(\tau) \chi_p (\tau  u),$$holds almost everywhere in $\Z_p$. Here $\mathcal{F}_{\Z_p^d}[\varphi]$ denotes the Fourier transform of $f$, in turn defined as $$\mathcal{F}_{\Z_p^d}[\varphi](\tau):= \int_{\Z_p^d} \varphi(u) \overline{\chi_p (\tau  u)}du,$$where $du$ is the normalised Haar measure on $\Z_p^d$. 

\subsection{The Heisenberg group over $\Z_p$}
Let $p>2$ be a prime number. Let us denote by $\mathbb{H}_d (\Z_p)$ the $(2d+1)$-dimensional Heisenberg group over $\Z_p$, or simply $\mathbb{H}_d $ for short, here defined as \[
\mathbb{H}_{d}(\Z_p)= \left\{
  \begin{bmatrix}
    1 & x^t & z \\
    0 & I_{d} & y \\
    0 & 0 & 1 
  \end{bmatrix}\in \mathrm{GL}_{d+2}(\Z_p) \, : \, x , y \in \Z_p^d, \, z \in \Z_p \right\}. 
\]
Clearly $\mathbb{H}_{d}(\Z_p)$ is a compact analytic $d$-dimensional manifold, which is homeomorphic to $\Z_p^{2d+1}$. Moreover, the operations on $\mathbb{H}_{d}(\Z_p)$ are analytic functions, making $\mathbb{H}_{d}(\Z_p)$ a $p$-adic Lie group. Let us denote by $\mathfrak{h}_{d}(\Z_p)$ its associated $\Z_p$-Lie algebra. We can write explicitly \[
\mathfrak{h}_{d}(\Z_p)= \left\{
  \begin{bmatrix}
    0 & \textbf{a}^t & c - \frac{\textbf{a} \cdot \textbf{b}}{2} \\
    0 & 0_{d} & \textbf{b} \\
    0 & 0 & 0 
  \end{bmatrix}\in \mathcal{M}_{d+2}(\Z_p) \, : \, \textbf{a} , \textbf{b} \in \Z_p^d, c \in \Z_p \right\}. 
\]Recall how for an element of the Lie algebra \[u:= \begin{bmatrix}
    0 & \textbf{a}^t & c - \frac{\textbf{a} \cdot \textbf{b}}{2}\\
    0 & 0_{d} & \textbf{b} \\
    0 & 0 & 0 
  \end{bmatrix},\]the exponential map evaluates to \[\textbf{exp} (u) = \begin{bmatrix}
    1 & \textbf{a}^t & c  \\
    0 & I_{d} & \textbf{b} \\
    0 & 0 & 1 
  \end{bmatrix}.\]The exponential map transform sub-ideals of the Lie algebra $\mathfrak{h}_{d} $ to subgroups of $\mathbb{H}_{d} $. Actually, we can turn the exponential map into a group homomorphism by using the Baker–Campbell–Hausdorff formula. Let us define the operation ``$\star$" on $\mathfrak{h}_{d}$ by $$X \star Y:= X + Y + \frac{1}{2} [X,Y].$$ Then clearly $(\mathfrak{h}_{d}  , \star) \cong \mathbb{H}_d $ is a profinite topological group, and it can be endowed with the sequence of subgroups $J_n := (  \mathfrak{h}_{d}(p^n\Z_p),\star)$, where $$ \mathfrak{h}_{d}(p^n\Z_p)= p^n \Z_p X_1 +...+ p^n \Z_p X_d + p^n \Z_p Y_1 + ...+ p^n\Z_p Y_{d} + p^{n} \Z_p Z,$$so $\mathbb{H}_d$ is a compact Vilenkin group, together with the sequence of compact open subgroups $$G_n := \mathbb{H}_d (p^n \Z_p)= \textbf{exp}(\mathfrak{h}_{d}(p^n\Z_p)), \,\,\, n \in \N_0.$$
  Notice how the sequence $\mathscr{G}=\{G_n\}_{n \in \N_0}$ forms a basis of neighbourhoods at the identity, so the group is metrizable, and we can endow it with the natural ultrametric \[ |(\mathbf{x},\mathbf{y},z)\star(\mathbf{x}',\mathbf{y}',z')^{-1}|_{\mathscr{G}} :=\begin{cases} 0 & \, \, \text{if} \, x=y, \\ |G_n| = p^{-n(2d+1)}  & \, \, \text{if} \, (\mathbf{x},\mathbf{y},z) \star(\mathbf{x}',\mathbf{y}',z')^{-1} \in G_n \setminus G_{n+1}.\end{cases}\]   
Nevertheless, instead of this ultrametric we will use the $p$-adic norm $$\| (\mathbf{x},\mathbf{y},z) \|_p:= \max \{\|\mathbf{x}\|_p, \| \mathbf{y} \|_p , |z|_p  \}.$$ Notice that how $\|(\mathbf{x},\mathbf{y},z) \|_p^{2d+1} = |(\mathbf{x},\mathbf{y},z)|_{\mathscr{G}},$ for any $(\mathbf{x},\mathbf{y},z) \in \mathbb{H}_d.$
\subsection{Directional VT operators}
One important idea from the theory of differential and pseudo-differential operators on Lie groups, is the correspondence between directional derivatives and elements of the Lie algebra. However, in the $p$-adic case, there are plenty of non-trivial locally constant functions, due to the fact that $p$-adic numbers are totally disconnected. This means that the usual notion of derivative does not apply, and therefore we need to find an alternative kind of operators to talk about differentiability on these groups. A first approach to this problem can be the \emph{Vladimitov-Taibleson operator} \cite{Dragovich2023, Dragovich2017}, which we define for general compact $\K$-Lie groups as follows:    

\begin{defi}\label{defiVToperator}\normalfont
Let $\K$ be a non-archimedean local field with ring of integers $\mathscr{O}_\K$, prime ideal $\mathfrak{p}=\textbf{p} \mathscr{O}_\K$ and residue field $\mathbb{F}_q \cong \mathscr{O}_\K/\textbf{p} \mathscr{O}_\K$. Let  $\mathbb{G} \leq \mathrm{GL}_m (\mathscr{O}_\K)$ be a compact $d$-dimensional $\K$-Lie group. We define the \emph{Vladimirov–Taibleson operator} on $\mathbb{G}$  via the formula \[
D^\alpha f(\mathbf{x}) := \frac{1 - q^\alpha}{1 - q^{- (\alpha + d)}} \int_{\mathbb{G}} \frac{f (\mathbf{x} \star \mathbf{y}^{-1}) - f(\mathbf{x})}{\|\mathbf{y} \|_\K^{ \alpha + d}} d\mathbf{y},
\]where \[\| \mathbf{y} \|_\K := \begin{cases}
    1, \, & \, \, \text{if} \, \, \mathbf{y} \in \mathbb{G} \setminus \mathrm{GL}_m (\textbf{p}\mathscr{O}_\K), \\ q^{-n}, \, & \, \, \text{if} \, \, \mathbf{y} \in \mathrm{GL}_m (\textbf{p}^n\mathscr{O}_\K) \setminus \mathrm{GL}_m (\textbf{p}^{n+1}\mathscr{O}_\K). 
\end{cases}\]Here $dy$ denotes the unique normalized Haar measure on $\mathbb{G}$. Sometimes it will be convenient to consider the operator \[
\mathbb{D}^\alpha f(\mathbf{x}) :=\frac{1-q^{-d}}{1-q^{-(\alpha +d)}}f(\mathbf{x}) + \frac{1 - q^\alpha}{1 - q^{- (\alpha + d)}} \int_{\mathbb{G}} \frac{f (\mathbf{x} \star \mathbf{y}^{-1}) - f(\mathbf{x})}{\|\mathbf{y} \|_\K^{ \alpha + d}} d\mathbf{y},
\]
\end{defi}

The Vladimirov–Taibleson operator can be considered as a fractional Laplacian for functions on totally disconnected spaces, and it provides a first notion of differentiability. However, for functions of several variables it is natural to consider the differentiability of the function in each variable, or in a certain given direction. For that reason, we introduce the following definition.

 \begin{defi}\label{defcompactdirectionalVT}\normalfont
      Let $\mathfrak{g}$ be the $\mathscr{O}_\K$-Lie module associated to $\mathbb{G}$, and assume that $\mathfrak{g}$ is nilpotent. Let $\alpha>0$. Given a $V \in \mathfrak{g}$, we define the \emph{directional VT operator of order $\alpha$ in the direction of} $V$ as the linear invariant operator $\partial_V^\alpha$ acting on smooth functions via the formula $$\partial_V^\alpha f (\mathbf{x}) := \frac{1 - q^{\alpha}}{1-q^{-(\alpha+1)}} \int_{\mathscr{O}_\K} \frac{f(\mathbf{x} \star \textbf{exp}(tV)^{-1}) - f(x)}{|t|_\K^{ \alpha + 1}}dt, \, \, \, f \in \mathcal{D}(G).$$ 
 \end{defi} 
  \begin{rem}
 Directional VT operators are interesting because they associate a certain pseudo-differential operator to each element of the Lie algebra. Nonetheless, it is important to remark how this association does not follow the same patter as in the locally connected case, where the correspondence between vectors and operators preserves the Lie algebra structure.
 \end{rem}
In order to study the behaviour of the directional VT operators, let us introduce an important definition:

\begin{defi}\label{defihypo}\normalfont
Let $\mathbb{G}$ be a compact $p$-adic Lie group, and let $\mathrm{Rep}(\mathbb{G})$ be the collection of all unitary continuous finite-dimensional representations of $\mathbb{G}$.
\begin{itemize}
    \item A \emph{symbol $\sigma$} is a mapping $$\sigma: \mathbb{G} \times \mathrm{Rep}(\mathbb{G}) \to \bigcup_{[\pi] \in \mathrm{Rep}(\mathbb{G})} \mathcal{L}(\mathcal{H}_{\pi}), \,\,\,\, (\mathbf{x},[\pi]) \mapsto \sigma(\mathbf{x}, \pi) \in \mathcal{L}(\mathcal{H}_\pi).$$Given a symbol on $\mathbb{G}$, we define its associated pseudo-differential operator as the linear operator $T_\sigma$ acting on $\mathcal{D}(\mathbb{G})$ via the formula $$T_\sigma f(\mathbf{x}):= \sum_{[\xi] \in \widehat{G}} d_\xi Tr[ \xi(\mathbf{x}) \sigma (\mathbf{x},\xi) \widehat{f} (\xi)].$$
    \item Conversely, given a densely defined linear operator $T : \mathcal{D} (\mathbb{G}) \subset D(T) \to \mathcal{D}'(\mathbb{G})$, we define its associated symbol via the formula $$\sigma(\mathbf{x}, [\pi]) = \pi^* (\mathbf{x}) T \pi (\mathbf{x}).$$ 
    \item Let $T_\sigma : \mathcal{D} (\mathbb{G}) \subset D(T) \to \mathcal{D}'(\mathbb{G})$ be a densely defined linear operator. We say that $T_\sigma$ is globally hypoelliptic if the condition $T_\sigma f = g$ with $f \in \mathcal{D}'(\mathbb{G})$ and $g \in \mathcal{S} (\mathbb{G})$ implies that $f \in \mathcal{S} (\mathbb{G})$. Here the \emph{Schwartz} space is defined as the collection of $L^2$-functions such that $$\| \widehat{f}(\xi) \|_{HS} \lesssim  \langle \xi \rangle_{\mathbb{G}}^{-k},\,\,\, \text{for all} \, k \in \N_0,$$where $\widehat{f}(\xi) := \int_{\mathbb{G}} f(\mathbf{x}) \xi^* (\mathbf{x}) d\mathbf{x},$ and $\langle \xi \rangle_{\mathbb{G}}$ denotes the eigenvalue of the Vladimirov-Taibleson operator $\mathbb{D}^1$ defined in Definition \ref{defiVToperator}, associated to the class $[\xi] \in \widehat{\mathbb{G}}$. 
\end{itemize}
\end{defi}Just to give an example, in the particular case when $G=\mathfrak{g}=\Z_p^d$, these operators take the form $$\partial_V^\alpha f (x) := \frac{1 - p^{\alpha}}{1-p^{-(\alpha+1)}}\int_{\Z_p^d} \frac{f(x -tV) - f(x)}{| t |_p^{ \alpha + 1}}dt, $$and one can easily compute its associated symbol: 
\[\sigma_{\partial_{V}^\alpha}(\xi) = \begin{cases}
    0, \, & \, \, \text{if} \, \, |V \cdot \xi|_p \leq 1,\\|V \cdot \xi|_p^\alpha - \frac{1 - p^{-1}}{1 - p^{- (\alpha + 1)}}  & \, \, \text{if} \, \, |V \cdot\xi|_p>1.
\end{cases}
 \]If we define $\partial_{x_i}^\alpha := \partial_{e_i}^\alpha $, where $e_i$, $1 \leq i \leq d$, are the canonical vectors of $\Q_p^d$, then \[\sigma_{\partial_{x_i}^\alpha}(\xi) = \begin{cases}
    0, \, & \, \, \text{if} \, \, |\xi_i|_p=1,\\| \xi_i|_p^\alpha - \frac{1 - p^{-1}}{1 - p^{- (\alpha + 1)}}  & \, \, \text{if} \, \, |\xi_i|_p>1,
\end{cases}
 \]which resembles the symbol of the usual partial derivatives on $\R^d$, justifying that way our choice of notation. However, we want to be emphatic about the fact that these directional VT operators do not preserve the Lie algebra structure, and they are not derivatives, but rather some special kind of pseudo-differential operators which we will study with the help of the Fourier analysis on compact groups.

Before proceeding to the next section, let us introduce some notation. 

\begin{defi}\label{definotation}\normalfont
\,
\begin{itemize}
        \item The symbol $\mathrm{Rep}(\mathbb{H}_d)$ will denote the collection of all unitary finite-dimensional representations of $\mathbb{H}_d$. We will denote by $\widehat{\mathbb{H}}_d$ the \emph{unitary dual of $\mathbb{H}_d$}.
        \item Let $K$ be a normal sub-group of $\mathbb{H}_d$. We denote by $K^\bot$ the \emph{anihilator of $K$}, here defined as $$K^\bot:= \{ [\pi] \in \mathrm{Rep}(\mathbb{H}_d) \, : \, \pi|_{K}=I_{d_\pi} \}.$$Also, we will use the notation $$B_{\widehat{\mathbb{H}}_d}(n):=\widehat{\mathbb{H}}_d \cap \mathbb{H}_d(p^n\Z_p)^\bot,$$and $\widehat{\mathbb{H}}_d(n):= B_{\widehat{\mathbb{H}}_d}(n) \setminus B_{\widehat{\mathbb{H}}_d}(n-1).$ 
        \item We say that a function $f:\mathbb{H}_d \to \C$ is a \emph{smooth function}, if $f$ is a locally constant function with a fixed index of local constancy, i.e., there is an $n_f \in \N_0$, which we always choose to be the minimum possible, such that $$f((\mathbf{x},\mathbf{x},z)\star(\mathbf{x}',\mathbf{y}',z')) = f(\mathbf{x},\mathbf{y},z), \, \, \,\text{ for all} \,\,\, (\mathbf{x}',\mathbf{y}',z) \in \mathbb{H}_d(p^{n_f} \Z_p).$$
        Here $\star$ is the operation on $\mathbb{H}_d$ which we simply take as $$(\mathbf{x},\mathbf{y},z)\star(\mathbf{x}',\mathbf{y}',z):=(\mathbf{x} + \mathbf{x}', \mathbf{y} + \mathbf{y}', z + z' + \mathbf{x} \cdot \mathbf{y}').$$ We will denote by $\mathcal{D}(\mathbb{H}_d)$ the collection of all smooth functions on $\mathbb{H}_d$, and $\mathcal{D}_n (\mathbb{H}_d)$ will denote the collection of smooth functions with index of local constancy equal to $n \in \N_0$. 
    \end{itemize}
\end{defi}
\section{Representation theory of the Heisenberg group}

\subsection{The idea behind: n=0, 1, 2}$\mathbb{H}_d$ is probably the simplest example of a (non-commutative) compact Vilenkin group. Just like for any other compact Vilenkin group, the representations of $\mathbb{H}_d$ have a non-trivial kernel, which is a compact open subgroup, and it must contain some of the subgroups $G_n =\mathbb{H}_d(p^n \Z_p)$. This means that each unitary irreducible representation of $G=\mathbb{H}_d$, whose associated matrix coefficients are therefore smooth functions, must descend to a representation of one of the groups $G/G_n \cong \mathbb{H}_d(\mathbb{F}_{p^n})$, $n \in \N_0$. We can use this information and some intuition to figure out the dimensions of the unitary irreducible representations.  We will do it here by finding all the elements in $B_{\widehat{\mathbb{H}}_d}(n)$ for each $n \in \N_0$.

\textbf{(i)}. $\widehat{\mathbb{H}}_d (0)$, the collection of unitary irreducible representations that are trivial on $G_0 = \mathbb{H}_d (\Z_p)$, contains only the identity representation.

\textbf{(ii)}.$B_{\widehat{\mathbb{H}}_d} (1)$, the collection of unitary irreducible representations that are trivial on $G_1 = \mathbb{H}_d (p \Z_p)$, are precisely those representations which descend to an element of the unitary dual of $G / G_1 \cong \mathbb{H}_d (\Z_p / p \Z_p) \cong \mathbb{H}_d (\mathbb{F}_p) .$ In general, any representation of $\mathbb{H}_d$ is the representation $Ind_\lambda$ induced by some central character $e^{2 \pi i \{ \lambda z \}_p}$, $\lambda \in \Q_p / \Z_p$. If the character is trivial on $(G_0 \setminus G_1) \cap \mathcal{Z}(\mathbb{H}_d)$, i.e. when $|\lambda|_p = 1$, then the representations induced by the trivial central character are all one-dimensional, and they must have the form $$\chi_{\xi, \eta} (\mathbf{x}, \mathbf{y} , z) := e^{2 \pi i \{ \mathbf{x} \cdot \xi + \mathbf{yt} \cdot \eta \}_p}, \, \, \, (\xi , \eta) \in \widehat{\Z}_p^d \times \widehat{\Z}_p^d.$$In particular, the condition $\chi_{\xi, \eta} \in B_{\widehat{\mathbb{H}}_d} (1) $ implies that $\| (\xi , \eta)\|_p \leq p$. In the case when the character is not trivial on $(G_0 \setminus G_1) \cap \mathcal{Z}(\mathbb{H}_d)$,  and we also assume that $Ind_\lambda \in \widehat{\mathbb{H}}_d (1)$, the only possibility is that $| \lambda |_p =p$, and $Ind_\lambda$ descend to a non-commutative representation of $\mathbb{H}_d (\mathbb{F}_p).$ These are representations of dimension $p^d$ which can be realized in the following sub-space of $L^2 (\mathbb{H}_d)$: 
$$\Tilde{\mathcal{H}}_\lambda := \spn_\C \{ \varphi_h (a, b,c ) :=  p^{d/2} e^{2 \pi i \{ \lambda c \}_p} \1_{h + p \Z_p^d} (a) \, : \, h \in \Z_p^d / p \Z_p^d  \},$$via the formula $$\tilde{\pi}_\lambda (\mathbf{x}, \mathbf{y} , z) \varphi (a,b,c):= \varphi((a,b,c)(\mathbf{x}, \mathbf{y} , z)) = e^{2 \pi i \{ \lambda(z + a \cdot \mathbf{y}) \}_p} \varphi (a + \mathbf{x} ,b,c) .$$For simplicity, we will use instead the realization $$\pi_\lambda (\mathbf{x}, \mathbf{y} , z) \varphi (u):= e^{2 \pi i \{ \lambda(z + u \cdot \mathbf{y}) \}_p} \varphi (u + \mathbf{x} ) , \, \, \, \varphi \in \mathcal{H}_\lambda \subset L^2 (\Z_p^d),$$where
$$\mathcal{H}_\lambda := \spn_\C \{ \varphi_h (u ) :=  p^{d/2}  \1_{h + p \Z_p^d} (u) \, : \, h \in \Z_p^d / p \Z_p^d  \},$$and it is easy to check how this gives an unitary representation of $\mathbb{H}_d (\Z_p)$. To check that it is irreducible, we calculate the matrix coefficients in order to have a simple expression for the trace: 
\begin{align*}
    (\pi_{ \lambda})_{h h'}(\mathbf{x}, \mathbf{y} , z)&= (\pi_{\lambda} \varphi_h , \varphi_{h'})_{L^2 (\Z_p^d)} \\ &= p^{ d } \int_{\Z_p^d} e^{2 \pi i \{ \lambda(z + u \cdot \mathbf{y}) \}_p} \1_{h+p \Z_p^d} (u +\mathbf{x}) \1_{h'+p \Z_p^d} (u) du \\ &=p^{ d }  \1_{h - h' +p \Z_p^d} (\mathbf{x}) \int_{h' + p \Z_p^d} e^{2 \pi i \{ \lambda(z + u \cdot \mathbf{y})  \}_p}  du \\ &= e^{ 2 \pi i \{ \lambda(z +  h' \cdot \mathbf{y} ) \}_p} \1_{h - h' +p \Z_p^d }(\mathbf{x}) \Big( p^{d} \int_{p \Z_p^d} e^{2 \pi i \{ \lambda  u \cdot \mathbf{y}\}_p } du \Big) \\ &=e^{ 2 \pi i \{ \lambda(z +  h' \cdot \mathbf{y} )  \}_p} \1_{h - h'+p \Z_p^d}(\mathbf{x}) .
\end{align*}
In this way, the character $\chi_{\pi_\lambda}$ is given by the expression      
$$\chi_{\pi_\lambda}(\mathbf{x}, \mathbf{y} , z) = \sum_{h \in  \Z_p^d/p\Z_p^d} e^{ 2 \pi i \{ \lambda(z +  h  \cdot \mathbf{y}  ) \}_p} \1_{p \Z_p^d}(\mathbf{x}) = p^d e^{ 2 \pi i \{ \lambda z  \}_p} \1_{p \Z_p^d}(\mathbf{x}) \1_{p \Z_p^d}(\mathbf{y}), $$thus the irreductibility of $[\pi_\lambda]$ is proven by the condition  $$\int_{\mathbb{H}_d}|\chi_{\pi_\lambda}(\mathbf{x}, \mathbf{y} , z)|^2 d\mathbf{x} \, d\mathbf{y} \, dz = p^{2d} \int_{\mathbb{H}_d}|\1_{p \Z_p^d}(\mathbf{x}) \1_{p \Z_p^d}(\mathbf{y})|^2 d\mathbf{x} \, d\mathbf{y} \, dz = 1.$$
Notice how these are all the elements of $B_{\widehat{\mathbb{H}}_d}(1)=\widehat{\mathbb{H}}_d \cap \mathbb{H}_d(p^1 \Z_p)^\bot$ since 
     $$\sum_{\| (\xi ,\eta) \|_p \leq p} \dim(\chi_{\xi , \eta})^2 + \sum_{| \lambda |_p = p } \dim(\pi_\lambda)^2 = \sum_{\| (\xi ,\eta) \|_p \leq p} 1^2 + \sum_{| \lambda |_p = p } (p^d)^2 =p^{2d+1}= |G/G_1|.$$

\textbf{(iii)}. Now we want to describe the elements of $B_{\widehat{\mathbb{H}}_d}(2)=\widehat{\mathbb{H}}_d \cap \mathbb{H}_d (p^2\Z_p)^\bot$. We start again with the representations $\chi_{\xi , \eta}$ trivial on the center, which in order to be in $B_{\widehat{\mathbb{H}}_d}(2)$ need to fulfill $\| (\xi ,\eta) \|_p \leq p^2.$ So we have $(p^2)^{2d}$ one-dimensional representations.  For the representations $ Ind_\lambda$ induced by a non-trivial central character we have the following possibilities.
\begin{itemize}
    \item $Ind_\lambda = \pi_{(\xi , \eta, \lambda)} =  \chi_{\xi , \eta} \otimes \pi_\lambda$, where $|\lambda|_p = p$ and $\| (\xi, \eta) \|_p \leq p^2$. If $\| (\xi ,\eta) \|_p = 1$ then $\pi_{(\xi , \eta, \lambda)} \in \widehat{\mathbb{H}}_d (1)$, and we already considered these representations. If $\|(\xi, \eta ) \|_p>1$, we can realize explicitly the unitary representation $\chi_{\xi , \eta} \otimes \pi_\lambda$ via the formula $$\pi_{(\xi, \eta, \lambda)} (\mathbf{x}, \mathbf{y} , z) \varphi (u):= e^{2 \pi i \{\xi \cdot \mathbf{x}+ \eta \cdot \mathbf{y} + \lambda(z + u \cdot \mathbf{y})  \}_p} \varphi (u + \mathbf{x} ) , \, \, \, \varphi \in \mathcal{H}_\lambda \subset L^2 (\Z_p^d),$$where
$$\mathcal{H}_\lambda := \spn_\C \{ \varphi_h (u ) :=  p^{d/2}  \1_{h + p \Z_p^d} (u) \, : \, h \in \Z_p^d / p \Z_p^d  \}.$$ In this case $d_{(\xi , \eta , \lambda)} := \dim_\C (\pi_{(\xi , \eta, \lambda)})=p^d$, and the associated characters are \begin{align*}
    \chi_{\pi_{(\xi, \eta, \lambda)}}(\mathbf{x}, \mathbf{y} , z) &= \sum_{h \in  \Z_p^d/p\Z_p^d} e^{ 2 \pi i \{ \lambda(z +  h \cdot \mathbf{y} ) + ( \xi \cdot \mathbf{x} +  \eta \cdot \mathbf{y}) \}_p} \1_{p \Z_p^d}(\mathbf{x}) \\ &= p^d e^{ 2 \pi i \{ \lambda z + (x \xi + y \eta) \}_p} \1_{p \Z_p^d}(x) \1_{p \Z_p^d}(y),
\end{align*}
so that $\| \chi_{\pi_{(\xi, \eta, \lambda)}} \|_{L^2 (\mathbb{H}_d)}=1$, proving that each $\pi_{(\xi, \eta, \lambda)}$ is irreducible. Also, notice how $$p^d e^{ 2 \pi i \{ \lambda z + (x \xi + y \eta) \}_p} \1_{p \Z_p^d}(x) \1_{p \Z_p^d}(y) = p^d e^{ 2 \pi i \{ \lambda z + (x \xi_2 + y \eta_2 ) \}_p} \1_{p \Z_p^d}(x) \1_{p \Z_p^d}(y),$$so that there are exactly $p^{2d}(p-1)$ different equivalent classes of representations among the tensor products $\chi_{\xi , \eta} \otimes \pi_\lambda$, corresponding to $\| (\xi, \eta) \|_p = p^2$, with $(\xi , \eta) \in  \Q_p^{2d} / p^{\vartheta(\lambda)}\Z_p^{2d}$, and $\| (\xi, \eta) \|_p =1$.  
     \item For the remaining $p(p-1)$ elements $\lambda$ with $|\lambda|_p = p^2$, $Ind_\lambda \in \widehat{\mathbb{H}}_d (2)$, and their associated induced representation $Ind_\lambda$ have to descend to one of the non-commutative representations in the unitary dual of $$G/G_2 = \mathbb{H}_d (\Z_p) / \mathbb{H}_d (p^2 \Z_p) \cong \mathbb{H}_d (\mathbb{F}_{p^2}),$$which are all representations of dimension $(p^2)^d$. These can be realized explicitly once again via the formula $$\pi_{(1, 1, \lambda)} (x, y , z) \varphi (u):= e^{2 \pi i \{ \lambda(z + u y)  \}_p} \varphi (u + x ) , \, \, \, \varphi \in \mathcal{H}_\lambda \subset L^2 (\Z_p^d),$$where this time
$$\mathcal{H}_\lambda := \spn_\C \{ \varphi_h (u ) :=  p^{d}  \1_{h + p^2 \Z_p^d} (u) \, : \, h \in \Z_p^d / p^2 \Z_p^d  \}.$$    
\end{itemize}

This concludes the description of $B_{\widehat{\mathbb{H}}_d}(2)$ since \begin{align*}
    \sum_{\|(\xi,\eta, \lambda)\|_p \leq p^2} d_{(\xi , \eta , \lambda)}^2 &= \sum_{\|(\xi,\eta)\|_p \leq p^2, \, |\lambda|_p=1} d_{(\xi , \eta , \lambda)}^2 + \sum_{\|(\xi,\eta)\|_p > p, \, \, |\lambda|_p=p} d_{(\xi , \eta , \lambda)}^2 +\sum_{ |\lambda|_p=p^2,  \, \,Ind_\lambda \in \widehat{\mathbb{H}}_d (\mathbb{F}_{p^2}) } d_{(\xi , \eta , \lambda)}^2 \\ &= \sum_{\|(\xi,\eta)\|_p \leq p^2, \, |\lambda|_p=1} 1^2 + \sum_{\|(\xi,\eta)\|_p \neq p, \, \, |\lambda|_p=p} (p^d)^2  \sum_{ \|(\xi , \eta)\|_p=1, \, |\lambda|_p=p^2 } ((p^2)^d)^2 \\ &= (p^2)^{2d} + p^{2d}(p-1)(p^d)^2 +  (p-1)p((p^2)^d)^2 \\&= (p^2)^{2d+1} = |G/G_2|.
\end{align*} 
Before advancing to the next sub-section, let us recall an important convention:
\begin{rem}
    Remember how in this paper we are identifying each equivalence class $\lambda$ in $\widehat{\Z}_p \cong \Q_p / \Z_p$, with its associated representative in the complete system of representatives $$\{1\} \cup \big\{ \sum_{k =1}^\infty \lambda_k p^{-k} \, : \, \, \text{only finitely many $\lambda_k$ are non-zero} \big\}.$$
\end{rem}
\subsection{The general case}
Let us summarize the process we just employed to obtain the representations in $\widehat{\mathbb{H}}_d \cap \mathbb{H}_d (p^2\Z_p)^\bot$.

\begin{enumerate}
    \item First we obtained the elements of $B_{\widehat{\mathbb{H}}_d} (1)$, which are simply the unitary irreducible representations of $\mathbb{H}_d(\Z_p)/\mathbb{H}_d(p \Z_p) \cong \mathbb{H}_d(\mathbb{F}_p)$. By doing this we see how there are two kind of representations of $\mathbb{H}_d$, characters $\chi_{\xi , \eta}$ and non-commutative representations $\pi_\lambda$ induced by a central character. We can consider their tensor products $\chi_{\xi , \eta} \otimes \pi_\lambda$ too, but we will obtain representations equivalent to the non-commutative representations $\pi_\lambda$, because they will share the same character.    
    \item Second, since the representations need to be trivial on $\mathbb{H}_d (p^2\Z_p)$, we must have $\|(\xi , \eta , \lambda) \|_p \leq p^2$. We got some one–dimensional representations, induced by the trivial central character, which are characters $\chi_{\xi , \eta}$ of $\mathbb{H}_d / \mathcal{Z} (\mathbb{H}_d) \cong \Z_p^{2d}$, such that $\|(\xi , \eta) \|_p \leq p^2$. There are exactly $(p^2)^{2d}$ of them.  
    \item Third, we considered tensor products. The representations $\chi_{\xi, \eta} \otimes \pi_\lambda $, $|\lambda|_p=p$, are equivalent to $\pi_\lambda$ if $\| (\xi , \eta)\|_p \leq p$, and they define non-equivalent unitary irreducible representations when $\|(\xi , \eta)\|_p = p^2$. So, in total we have $p^{2d}(p-1)$ representations of the form $\chi_{\xi, \eta} \otimes \pi_\lambda$, and for them we have $d_{(\xi, \eta, \lambda)}=p^d$. 
    \item If $|\lambda|_p=p^2$, there are exactly $p(p-1)$ of these $\lambda$, and their induced representations $Ind_\lambda$ should descend to a unitary irreducible non-commutative representation of $$\mathbb{H}_d (\Z_p) / \mathbb{H}_d (p^2 \Z_p) \cong \mathbb{H}_d (\mathbb{F}_{p^2}),$$which are all of dimension $(p^2)^d$.  
\end{enumerate}
Summing up, we can write: $$B_{\widehat{\mathbb{H}}_d}(2) = \{ \chi_{\xi , \eta} \otimes \pi_\lambda \, :\, \| (\xi , \eta , \lambda) \|_p \leq p^2, \,\, \text{and} \,  \|(\xi, \eta)\|=1, \, \, \text{or} \, \, \,  (\xi, \eta) \in \Q_p^{2d}/ p^{\vartheta(\lambda)}\Z_p\}.$$  
We can use the arguments collected so far to obtain all the unitary irreducible representations of $\mathbb{H}_d$. To illustrate the general process, let us start with the elements of $B_{\widehat{\mathbb{H}}_d}(3)=\widehat{\mathbb{H}}_d \cap \mathbb{H}_d (p^3\Z_p)^\bot$. 
\begin{itemize}
    \item We have $(p^{3})^{2d}$ characters $\chi_{\xi , \eta}$ corresponding to the $(\xi , \eta) \in \widehat{\Z}_p^{2d}$ such that $\| (\xi , \eta) \|_p \leq p^3$. If we identify the trivial central character with $\lambda = 1$, then these characters are indexed by the set $$ \{ (\xi , \eta , 1) \in \widehat{\Z}_p^{2d+1} \, :\, \| (\xi , \eta , 1) \|_p \leq p^3, \,\, \text{and} \, \, (\xi, \eta) \in \Q_p^{2d} / \Z_p^{2d} \}.$$ 
    \item More generally, for any $\lambda \in \widehat{\Z}_p$, $1<|\lambda|_p \leq p^3$, we can check how the representation $$\pi_{\lambda} (x, y , z) \varphi (u):= e^{2 \pi i \{ \lambda(z + u y)  \}_p} \varphi (u + x ) , \, \, \, \varphi \in \mathcal{H}_\lambda \subset L^2 (\Z_p^d),$$where$$\mathcal{H}_\lambda := \spn_\C \{ \varphi_h (u ) :=  |\lambda|_p^{d/2}  \1_{h + |\lambda|_p \Z_p^d} (u) \, : \, h \in \Z_p^d / |\lambda|_p \Z_p^d  \},$$is unitary and irreducible, since its associated character is given by 

    $$\chi_{\pi_\lambda} (x,y,z) = |\lambda|_p^d e^{2 \pi i \{ \lambda z\}_p}\1_{p^{-\vartheta(\lambda)} \Z_p^d} (x)\1_{p^{-\vartheta(\lambda)} \Z_p^d} (y).$$The irreducibility condition still holds when we take tensor products with the commutative characters $\chi_{\xi, \eta}$, as long as $(\xi , \eta) \in \Q_p^{2d} / p^{\vartheta(\lambda)} \Z_p^{2d}$. Also, for a fixed $1 < |\lambda|_p \leq p^3$  we get $$|\{ (\xi , \eta) \in \widehat{\Z}_p^{2d} \, :\, \|(\xi, \eta) \|_p \leq p^3, \, (\xi , \eta) \in \Q_p^{2d} / p^{\vartheta(\lambda)} \Z_p^{2d} \}| = (p^3)^{2d}|\lambda|_p^{-2d}.$$
     In this way \begin{align*}
         \sum_{(\xi , \eta, \lambda) \in B_{\widehat{\mathbb{H}}_d}(3)} d_{(\xi , \eta , \lambda)}^2 &= \sum_{1 \leq |\lambda|_p \leq p^3 } \sum_{\{\| (\xi , \eta) \|_p \leq p^3 \, : (\xi , \eta) \in \Q_p^{2d} / p^{\vartheta(\lambda)} \Z_p^{2d}\}} d_{(\xi , \eta , \lambda)}^2 \\ 
         &= \sum_{1 \leq |\lambda|_p \leq p^3 } \sum_{\{ \| (\xi , \eta) \|_p \leq p^3 \, : \, (\xi , \eta) \in \Q_p^{2d} / p^{\vartheta(\lambda)} \Z_p^{2d}\}} (|\lambda|_p^d)^2 \\ 
         &= \sum_{1 \leq |\lambda|_p \leq p^3 } |\lambda|_p^{2d} |\{ \|(\xi , \eta)\| \leq p^3 \, : \,(\xi , \eta) \in \Q_p^{2d} / p^{\vartheta(\lambda)} \Z_p^{2d} \}| \\ &= (p^3)^{2d}+ \sum_{1 < |\lambda|_p \leq p^3 } |\lambda|_p^{2d}  (p^3)^{2d}|\lambda|_p^{-2d} = (p^{3})^{2d+1}.
     \end{align*}
\end{itemize}
More generally, we can identify $\widehat{\mathbb{H}}_d$ with the set $$\widehat{\mathbb{H}}_d = \{ (\xi , \eta , \lambda) \in \widehat{\Z}_p^{2d+1} \, : \, (\xi , \eta) \in \Q_p^{2d} / p^{\vartheta(\lambda)} \Z_p^{2d} \},$$so that the annihilators $B_{\widehat{\mathbb{H}}_d}(n)=\widehat{\mathbb{H}}_d \cap \mathbb{H}_d (p^n\Z_p)^\bot$ coincide with the balls 
$$B_{\widehat{\mathbb{H}}_d}(n) = \{ (\xi , \eta , \lambda) \in \widehat{\mathbb{H}}_d \, : \, \| (\xi , \eta , \lambda) \|_p \leq p^n \},$$also
$$\widehat{\mathbb{H}}_d (n)= \{ (\xi , \eta , \lambda) \in \widehat{\mathbb{H}}_d \, : \, \| (\xi , \eta , \lambda) \|_p = p^n \} .$$
We can easily check how these are indeed all the desired representations, since \begin{align*}
         \sum_{(\xi , \eta, \lambda) \in B_{\widehat{\mathbb{H}}_d}(n)} d_{(\xi , \eta , \lambda)}^2 &= \sum_{1 \leq |\lambda|_p \leq p^n } \sum_{\{ \| (\xi , \eta) \|_p \leq p^n \, : \, (\xi , \eta) \in \Q_p^{2d} / p^{\vartheta(\lambda)} \Z_p^{2d}\}} (|\lambda|_p^d)^2 \\ 
         &= (p^n)^{2d} + \sum_{1 < |\lambda|_p \leq p^n } \sum_{\{ \| (\xi , \eta) \|_p \leq p^n \, : \,  \| (\xi , \eta) \|_p \leq p^n \, : \, (\xi , \eta) \in \Q_p^{2d} / p^{\vartheta(\lambda)} \Z_p^{2d}\}} (|\lambda_p|^d)^2 \\ 
         &= (p^n)^{2d} + \sum_{1< |\lambda|_p \leq p^n } (|\lambda|_p^d)^2|\{ \|(\xi , \eta)\| \leq p^n \, : (\xi , \eta) \in \Q_p^{2d} / p^{\vartheta(\lambda)} \Z_p^{2d} \}| \\ &=(p^n)^{2d} + \sum_{1 < |\lambda|_p \leq p^n } |\lambda|_p^{2d} ((p^{n})^{2d}|\lambda|_p^{-2d}) \\ &=(p^n)^{2d} + (p^n)^{2d}(p^n - 1) = (p^{n})^{2d+1} = |G/G_n|=| \mathbb{H}_d (\mathbb{F}_{p^n})|.
     \end{align*}
This gives a complete description of the unitary dual of $\mathbb{H}_d$. 

\subsection{Matrix coefficients and Fourier series.}
From the analysis in the previous subsection, and the Fourier analysis in general compact groups, we obtain the following Fourier series representation for functions on $\mathbb{H}_d$, which will be an important tool in our analysis:  $$f(\mathbf{x},\mathbf{y},z) =\sum_{\lambda \in \widehat{\Z}_p } \sum_{ (\xi , \eta) \in \Q_p^{2d} / p^{\vartheta(\lambda)} \Z_p^{2d}} |\lambda|_p^d Tr[ \chi_{\xi , \eta} (\mathbf{x},\mathbf{y})\pi_\lambda (\mathbf{x},\mathbf{y},z) \widehat{f} (\xi , \eta , \lambda)],$$where $$ \widehat{f} (\xi , \eta , \lambda) \varphi = \mathcal{F}_{\mathbb{H}_d}[f](\xi , \eta ,\lambda) \varphi := \int_{\mathbb{H}_d} f(\mathbf{x},\mathbf{y},z) \overline{ \chi_{\xi , \eta}} (\mathbf{x},\mathbf{y}) \pi_\lambda^* (\mathbf{x},\mathbf{y},z) \varphi  \, dx \, dy \, dz, \,\, \, \, \, f \in L^2 (\mathbb{H}_d).$$This information is useful, but insufficient for many purposes, and we will like to have a more explicit description of $\widehat{\mathbb{H}}_d$. So, in this sub-section our goal is to provide for the reader explicit realizations of the representations $\pi_{(\xi , \eta , \lambda)}$ and their associated matrix coefficients. The job should be easy enough after the analysis in the past sub-section, specially considering the fact that the representation theory of the Heisenberg group is very well known. 

We will choose our representation space $\mathcal{H}_\lambda$ to be the sub-space of $L^2 (\Z_p^d)$ $$\mathcal{H}_\lambda := \mathrm{Span}_\C \{ \varphi_h \, : \, h \in \Z_p^d / p^{-\vartheta(\lambda)} \Z_p^d  \}, \, \, \, \varphi_h (u) := |\lambda|_p^{d/2} \1_{h + p^{-\vartheta(\lambda)} \Z_p^d} (u), \, \, \dim(\mathcal{H}_\lambda) =|\lambda|_p^d.$$And the representation will act on functions in this spaces via the formula $$\pi_{(\xi , \eta , \lambda)}(\mathbf{x},\mathbf{y},z) \varphi (u) := e^{2 \pi i \{\xi \cdot \mathbf{x} + \eta \cdot \mathbf{y} + \lambda (z + u \cdot \mathbf{y}) \}_p} \varphi (u + \mathbf{x}), \, \, \, \, \varphi \in \mathcal{H}_\lambda.$$
The operator $\pi_{(\xi , \eta , \lambda)}(\mathbf{x},\mathbf{y},z)$ defines an unitary operator on $L^2 (\Z_p^d)$, for any $(\mathbf{x},\mathbf{y},z) \in \mathbb{H}_d$, and its associated adjoint operator is given by $$\pi_{(\xi , \eta , \lambda)}^*(\mathbf{x},\mathbf{y},z) \varphi (u) := e^{-2 \pi i \{ \xi \cdot \mathbf{x} + \eta \cdot \mathbf{y} + \lambda (z + (u-\mathbf{x})\cdot \mathbf{y}) \}_p} \varphi (u - \mathbf{x}).$$
Notice how the space $\mathcal{H}_\lambda$, which is simply $\mathcal{D}_{n_\lambda}(\Z_p^d)$, $|\lambda|_p = p^{n_\lambda}$, the collection of smooth functions on $\Z_p^d$ such that $\varphi(u +v) = \varphi(u),$ for $ v \in p^{n_\lambda} \Z_p^d,$  is invariant under the action of $\pi_{(\xi , \eta , \lambda)}$. Using the natural basis for $\mathcal{H}_\lambda$, i.e.  $|\lambda|_p^{d/2} \1_{h+p^{-\vartheta(\lambda)}\Z_p^d}, \, \, h \in \Z_p^d/ p^{-\vartheta(\lambda)}\Z_p^d$, the associated matrix coefficients of the representations are given by \begin{align*}
    (\pi_{(\xi , \eta , \lambda)})_{h h'}&= (\pi_{(\xi , \eta , \lambda)} \varphi_h , \varphi_{h'})_{L^2 (\Z_p^d)} \\ &= |\lambda|_p^d \int_{\Z_p^d} e^{2 \pi i \{ \lambda(z + u \cdot \mathbf{y}) + ( \xi \cdot \mathbf{x} +  \eta \cdot \mathbf{y}) \}_p} \1_h (u +\mathbf{x}) \1_{h'} (u) du \\ &=|\lambda|_p^d  \1_{h - h'} (\mathbf{x}) \int_{h' + p^{-\vartheta(\lambda)} \Z_p^d} e^{2 \pi i \{ \lambda(z + u \cdot \mathbf{x}) + ( \xi \cdot \mathbf{x} + \eta \cdot \mathbf{y}) \}_p}  du \\ &= e^{ 2 \pi i \{ \lambda(z +  h' \cdot \mathbf{y} ) + (\xi \cdot \mathbf{x} + \eta \cdot \mathbf{y}) \}_p} \1_{h - h'}(\mathbf{x}) \Big( |\lambda|_p^d \int_{p^{-\vartheta(\lambda)} \Z_p^d} e^{2 \pi i \{ \lambda  u \cdot \mathbf{y}\}_p } du \Big) \\ &=e^{ 2 \pi i \{ \lambda(z + h' \cdot \mathbf{y} ) + (\xi \cdot \mathbf{x} + \eta \cdot \mathbf{y}) \}_p} \1_{h - h'}(\mathbf{x}) .
\end{align*}
With the same arguments,
\begin{align*}
    (\pi_{(\xi , \eta , \lambda)}^*)_{h h'}&= |\lambda|_p^d \int_{\Z_p^d} e^{-2 \pi i \{ \lambda(z + (u-\mathbf{x}) \cdot \mathbf{y}) + ( \xi \cdot \mathbf{x} + \eta \cdot \mathbf{y}) \}_p} \1_h (u -\mathbf{x}) \1_{h'} (u) du \\ &=|\lambda|_p^d  \1_{h' - h} (\mathbf{x}) \int_{h' + p^{-\vartheta(\lambda)} \Z_p^d} e^{-2 \pi i \{ \lambda(z + (u-\mathbf{x}) \cdot \mathbf{y}) + ( \xi \cdot \mathbf{x} + \eta \cdot \mathbf{y}) \}_p}  du \\ &= e^{ -2 \pi i \{ \lambda(z + (h'-\mathbf{x}) \cdot \mathbf{y} ) + ( \xi \cdot \mathbf{x} + \eta \cdot \mathbf{x}) \}_p} \1_{h'- h}(\mathbf{x}) \Big( |\lambda|_p^d \int_{p^{-\vartheta(\lambda)} \Z_p^d} e^{-2 \pi i \{ \lambda u y\}_p } du \Big) \\ &=e^{ -2 \pi i \{ \lambda(z +  (h'-\mathbf{x}) \cdot \mathbf{y} ) + (\xi \cdot \mathbf{x} + \eta \cdot \mathbf{y}) \}_p} \1_{h' - h}(x) \\ & = e^{ -2 \pi i \{ \lambda(z + h \cdot \mathbf{y} ) + (\xi \cdot \mathbf{x} + \eta \cdot \mathbf{y}) \}_p} \1_{h' - h}(\mathbf{x}) \\ &=\overline{(\pi_{(\xi , \eta , \lambda)})_{h' h}}.
\end{align*}
Now let us consider for a moment the sub-space $\mathcal{D}_n(\mathbb{H}_d)$ of $L^2 (\mathbb{H}_d)$ defined in Definition \ref{definotation}. It is easy to see that   $$\dim_\C (\mathcal{D}_n(\mathbb{H}_d)) = |\mathbb{H}_d ( \Z_p) / \mathbb{H}_d (p^n \Z_p)| = (p^n)^{2d +1},$$and for any $f \in \mathcal{D}_n(\mathbb{H}_d)$  and $\| (\xi , \eta , \lambda) \|_p>p^n$ we have \begin{align*}
    \widehat{f} &(\xi , \eta , \lambda):= \int_{\mathbb{H}_d} f(\mathbf{x},\mathbf{y},z) \overline{ \chi_{\xi , \eta}} (\mathbf{x},\mathbf{y}) \pi_\lambda^* (\mathbf{x},\mathbf{y},z)  \, d\mathbf{x} \, d\mathbf{y} \, dz \\ & = \int_{ \mathbb{H}_d ( \Z_p) / \mathbb{H}_d (p^n \Z_p)} \int_{(a,b,c)\mathbb{H}_d (p^n \Z_p)} f((a,b,c)(\mathbf{x},\mathbf{y},z)) \pi_{(\xi, \eta, \lambda)}^* ((a,b,c)(\mathbf{x},\mathbf{y},z))  d(\mathbf{x},\mathbf{y},z) \, d(a,b,c) \\ &= \sum_{(a, b ,c) \in \mathbb{H}_d ( \Z_p) / \mathbb{H}_d (p^n \Z_p)} f(a,b,c) \pi_{(\xi, \eta, \lambda)}^*(a,b,c) \int_{\mathbb{H}_d (p^n \Z_p)}\pi_{(\xi, \eta, \lambda)}^* (\mathbf{x},\mathbf{y},z)  d(\mathbf{x},\mathbf{y},z) = 0_{|\lambda|_p^d}.
\end{align*}
On the other hand,  we can also check how for the sub-space of $L^2 (\mathbb{H}_d)$ $$\mathcal{H}_n := Span_\C \{(\pi_{(\xi , \eta, \lambda)})_{h h'} \, : \, \|(\xi , \eta , \lambda) \|_p \leq p^n, \, \, \, h, h' \in \Z_p^d / p^{-\vartheta(\lambda)}\Z_p^d \},$$ it holds that$$\mathcal{H}_n \subseteq \mathcal{D}_n(\mathbb{H}_d), \, \, \, \text{and} \, \, \, \dim_\C(\mathcal{H}_n) = (p^n)^{2d+1}.$$In conclusion, we get that $\mathcal{D}_n(\mathbb{H}_d) = \mathcal{H}_n,$ and the functions $|\lambda|_p^{d/2}(\pi_{(\xi , \eta, \lambda)})_{h h'}$ form an orthonormal basis of $\mathcal{D}_n(\mathbb{H}_d)$, for any $n \in \N_0$. Since this is true for arbitrary $n$, we have just proven how the space of trigonometric polynomials on $\mathbb{H}_d$ coincides with the space of smooth functions, which in turns proves the density of the space of trigonometric polynomials on $L^r (\mathbb{H}_d)$, for $1 \leq r < \infty$. The results collected so far constitute the proof of Theorem \ref{TeoRepresentationsHd}.

\begin{rem}\label{remrelationFTHd}
Notice how any function on $\mathbb{H}_d$ can also be considered as a function on $L^2 (\Z_p^{2d+1})$. Then, given a function $f \in L^2 (\mathbb{H}_d)$, we can consider both, its group Fourier transform $\widehat{f}= \mathcal{F}_{\mathbb{H}_d} [f]$, and its $\Z_p$-Fourier transform $\mathcal{F}_{\Z_p^{2d+1}}[f]$. By using the Fourier inversion formula on $\Z_p^d$ we obtain the following relationship between  $\mathcal{F}_{\Z_p^{2d+1}}[f]$ and $\widehat{f}$: 
\begin{align*}
    \widehat{f}(\xi , \eta, \lambda) g (u) &= \int_{\mathbb{H}_d} f(\mathbf{x},\mathbf{y},z) \overline{ \chi_{\xi , \eta}} (\mathbf{x},\mathbf{y}) \pi_\lambda^* (\mathbf{x},\mathbf{y},z) g (u) \, d \mathbf{x} \, d \mathbf{y} \, dz\\ =\int_{\Z_p^{2d+1}}\sum_{(\alpha , \beta , \gamma) \in \widehat{\Z}_p^{2d+1} } &\mathcal{F}_{\Z_p^{2d+1}}[f] (\alpha , \beta , \gamma) e^{2 \pi i \{ (\alpha , \beta , \gamma) \cdot (\mathbf{x},\mathbf{y},z) - ( \xi \cdot \mathbf{x} + \eta \cdot \mathbf{y}) - \lambda(z + (u - \mathbf{x}) \cdot \mathbf{y})  \}_p}   g(u-\mathbf{x})  d \mathbf{x} \, d\mathbf{y} \, dz \\ &= \int_{\Z_p^{d}}\sum_{\alpha  \in \widehat{\Z}_p^{d} } \mathcal{F}_{\Z_p^{2d+1}}[f] (\alpha , \lambda (u - \mathbf{x}) + \eta ,  \lambda) e^{2 \pi i \{ (\alpha - \xi) \cdot \mathbf{x} \}_p}   g(u-\mathbf{x})  dx \\ &=\int_{\Z_p^{d}}\sum_{\alpha  \in \widehat{\Z}_p^{d} } \mathcal{F}_{\Z_p^{2d+1}}[f] (\alpha + \xi , \lambda  v + \eta,  \lambda) e^{2 \pi i \{ \alpha (u -v)\}_p}   g(v) dv \\ &= \int_{\Z_p^{d}}K_{f,(\xi , \eta , \lambda)} (u,v)   g(v)  dv, \, \, \, \, g \in L^2 (\Z_p^d), 
\end{align*}
where $$K_{f,(\xi , \eta , \lambda)} (u,v):= \sum_{\alpha  \in \widehat{\Z}_p^{d} } \mathcal{F}_{\Z_p^{2d+1}}[f] (\alpha + \xi , \lambda  v + \eta,  \lambda) e^{2 \pi i \{ \alpha (u -v)\}_p}.$$
In other words, we can think on the group Fourier transform of $f$ as an integral operator with kernel $K_{f,(\xi , \eta , \lambda)}$, acting on a certain finite-dimensional sub-space of $L^2 (\Z_p^d)$. This resembles the classical case of the Heisenberg group on the Euclidean space. In terms of the matrix coefficients we have: \begin{align*}
    \widehat{f}&(\xi , \eta, \lambda)_{h h'}\\  &=\int_{\Z_p^{2d+1}}\sum_{(\alpha , \beta , \gamma) \in \widehat{\Z}_p^{2d+1} } \mathcal{F}_{\Z_p^{2d+1}}[f] (\alpha , \beta , \gamma) e^{2 \pi i \{ (\alpha , \beta , \gamma) \cdot (\mathbf{x},\mathbf{y},z) - ( \xi \cdot \mathbf{x} + \eta \cdot \mathbf{y}) - \lambda(z + h \cdot \mathbf{y})  \}_p}   \1_{h'-h + p^{-\vartheta(\lambda)}\Z_p^d} (\mathbf{x})  d \mathbf{x} \, d\mathbf{y} \, dz \\ &= \int_{\Z_p^{d}}\sum_{\alpha  \in \widehat{\Z}_p^{d} } \mathcal{F}_{\Z_p^{2d+1}}[f] (\alpha , \lambda  h + \eta ,  \lambda) e^{2 \pi i \{ (\alpha - \xi) \cdot \mathbf{x}\}_p}   \1_{h'-h + p^{-\vartheta(\lambda)}\Z_p^d} (\mathbf{x})  d \mathbf{x} \\ &=\sum_{\alpha  \in \widehat{\Z}_p^{d} } \mathcal{F}_{\Z_p^{2d+1}}[f] (\alpha  , \lambda  h + \eta,  \lambda) \mathcal{F}_{\Z_p^d}[\1_{h'-h + p^{-\vartheta(\lambda)}\Z_p^d}](\xi - \alpha).    
\end{align*}
\end{rem}

\section{The Vladimirov Sub-Laplacian on the Heisenberg group}
A simple calculation proves how the matrix entries of the representations $\pi_{(\xi , \eta , \lambda)}$ are eigenfunctions of the VT operator of order $\alpha>0$, with corresponding eigenvalues $\| (\xi , \eta , \lambda) \|_p^\alpha - \frac{1 - p^{-(2d+1)}}{1 - p^{-(\alpha +2d+ 1)}}$. This shows how the VT operator is an example of a very special class of operators, which we call here globally hypoelliptic operators. For left invariant operators, its associated symbol in the sense of Definition \ref{defihypo} is independent of the variables $(x,y,z) \in \mathbb{H}_d$. We call these operators sometimes Fourier multipliers, and the VT operator together with the directional VT operators are important examples of such class. For Fourier multipliers, it is easy  to prove how the global hypoellipticity is completely determined by the behaviour at infinity of the symbol. 

\begin{rem}
Notice how, after describing explicitly the unitary dual of $\mathbb{H}_d$, a function in $f \in L^2 (\mathbb{H}_d)$ is a Schwartz function if and only if $$\| \widehat{f}(\xi, \eta, \lambda)\| \lesssim \| (\xi, \eta, \lambda) \|_p^{-k}, \, \, \, \, \text{for every} \, k \in \N_0.$$    
\end{rem}
\begin{lema}\label{Teohypoellipticity}
Let $T_\sigma$ be a Fourier multiplier. Then $T_\sigma$ is globally hypoelliptic if and only if there is an $m \in \mathbb{R}$ such that $\| (\xi, \eta ,\lambda) \|_p^m \lesssim \| \sigma (\xi, \eta, \xi) \|_{inf},$ for $\| ( \xi, \eta, \lambda) \|_p$ large enough. Here we are using the notation $$\| \sigma (\xi, \eta, \lambda) \|_{inf} := \inf \{\|\sigma(\xi, \eta, \lambda) v\| \, : \, \|v\|=1, \, \, v \in \mathcal{H}_{(\xi, \eta,\lambda)} \}.$$When $\| \sigma(\xi, \eta, \lambda)\|_{inf}$ is non-zero, $\sigma (\xi, \eta, \lambda)$ is invertible and $\| \sigma (\xi, \eta, \lambda)^{-1}\|_{op} = \| \sigma(\xi, \eta, \lambda)\|_{inf}^{-1}.$
\end{lema}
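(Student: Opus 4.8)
The plan is to transfer everything to the Fourier side, where a Fourier multiplier acts diagonally. Recall that for a left-invariant symbol one has $\widehat{T_\sigma f}(\xi,\eta,\lambda) = \sigma(\xi,\eta,\lambda)\,\widehat{f}(\xi,\eta,\lambda)$ at every $(\xi,\eta,\lambda)\in\widehat{\mathbb{H}}_d$, and that, after the explicit description of $\widehat{\mathbb{H}}_d$, membership in the two relevant spaces is a purely quantitative condition on the operator norms $\|\widehat{f}(\xi,\eta,\lambda)\|$: a distribution $f\in\mathcal{D}'(\mathbb{H}_d)$ is characterised by polynomial growth $\|\widehat{f}(\xi,\eta,\lambda)\|\lesssim\|(\xi,\eta,\lambda)\|_p^{N}$ for some $N$, while $f\in\mathcal{S}(\mathbb{H}_d)$ is characterised by super-polynomial decay $\|\widehat{f}(\xi,\eta,\lambda)\|\lesssim\|(\xi,\eta,\lambda)\|_p^{-k}$ for every $k\in\N_0$. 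I would first dispose of the auxiliary linear-algebra claim: since each $\mathcal{H}_{(\xi,\eta,\lambda)}$ is finite dimensional, $\|\sigma\|_{inf}$ is the smallest singular value of $\sigma$, so $\|\sigma\|_{inf}>0$ forces injectivity, hence bijectivity, and $\|\sigma^{-1}\|_{op}=\sup_{u\neq 0}\|u\|/\|\sigma u\|=\|\sigma\|_{inf}^{-1}$.

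For the sufficiency direction, suppose $\|(\xi,\eta,\lambda)\|_p^{m}\lesssim\|\sigma(\xi,\eta,\lambda)\|_{inf}$ for $\|(\xi,\eta,\lambda)\|_p$ large, and let $T_\sigma f=g$ with $f\in\mathcal{D}'(\mathbb{H}_d)$ and $g\in\mathcal{S}(\mathbb{H}_d)$. Outside a finite ball the right-hand quantity is strictly positive, so $\sigma$ is invertible there with $\|\sigma^{-1}\|_{op}=\|\sigma\|_{inf}^{-1}\lesssim\|(\xi,\eta,\lambda)\|_p^{-m}$ by the auxiliary claim. From $\sigma\widehat{f}=\widehat{g}$ we get $\widehat{f}=\sigma^{-1}\widehat{g}$ on this region, whence $\|\widehat{f}(\xi,\eta,\lambda)\|\le\|\sigma^{-1}\|_{op}\,\|\widehat{g}(\xi,\eta,\lambda)\|\lesssim\|(\xi,\eta,\lambda)\|_p^{-m-k}$ for every $k$, using the Schwartz decay of $g$. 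Inside the finite ball $\widehat{f}$ is bounded, since there are only finitely many frequencies and each carries a polynomially bounded coefficient; hence $f$ decays faster than any power, i.e. $f\in\mathcal{S}(\mathbb{H}_d)$, and $T_\sigma$ is globally hypoelliptic.

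For the necessity direction I would argue by contraposition and build an explicit counterexample. If the polynomial lower bound fails, then for each $j\in\N_0$ one can choose a frequency $(\xi_j,\eta_j,\lambda_j)$ with $\|(\xi_j,\eta_j,\lambda_j)\|_p\ge p^{j}$ and $\|\sigma(\xi_j,\eta_j,\lambda_j)\|_{inf}\le\|(\xi_j,\eta_j,\lambda_j)\|_p^{-j}$, the degenerate case $\|\sigma\|_{inf}=0$ being included; passing to a subsequence we may take these frequencies distinct and escaping to infinity. Let $v_j$ be a unit vector attaining the infimum, so that $\|\sigma(\xi_j,\eta_j,\lambda_j)v_j\|=\|\sigma(\xi_j,\eta_j,\lambda_j)\|_{inf}$, and define $f$ by setting $\widehat{f}(\xi_j,\eta_j,\lambda_j)$ equal to the rank-one operator $u\mapsto\langle u,v_j\rangle v_j$ and $\widehat{f}=0$ at every other frequency. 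These coefficients have norm $1$, so $f$ is a genuine distribution (bounded coefficients) that is not Schwartz (no decay along the sequence), while $g:=T_\sigma f$ satisfies $\|\widehat{g}(\xi_j,\eta_j,\lambda_j)\|=\|\sigma(\xi_j,\eta_j,\lambda_j)\|_{inf}\le\|(\xi_j,\eta_j,\lambda_j)\|_p^{-j}$ and vanishes elsewhere, hence $g\in\mathcal{S}(\mathbb{H}_d)$. This contradicts global hypoellipticity and establishes the reverse implication.

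I expect the main difficulty to be bookkeeping rather than conceptual: carefully justifying the Fourier-side dictionary, namely the multiplier identity $\widehat{T_\sigma f}=\sigma\widehat{f}$ for distributional $f$ together with the two norm characterisations of $\mathcal{D}'(\mathbb{H}_d)$ and $\mathcal{S}(\mathbb{H}_d)$, and verifying that the object $f$ produced in the necessity direction genuinely lies in $\mathcal{D}'(\mathbb{H}_d)$ with $T_\sigma f\in\mathcal{S}(\mathbb{H}_d)$. The structural fact that makes the whole argument run is the finite-dimensionality of the representation spaces $\mathcal{H}_{(\xi,\eta,\lambda)}$, since it guarantees that the infimum $\|\sigma\|_{inf}$ is attained and that injectivity upgrades to invertibility with the sharp identity $\|\sigma^{-1}\|_{op}=\|\sigma\|_{inf}^{-1}$.
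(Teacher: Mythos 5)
Your proof follows essentially the same two-step strategy as the paper: invert the symbol at high frequencies for sufficiency, and build a counterexample supported on a bad sequence of frequencies for necessity. The one substantive difference is in the necessity step, and it is in your favour: the paper sets $\widehat{f}(\xi_n,\eta_n,\lambda_n)=I_{d_{(\xi_n,\eta_n,\lambda_n)}}$, so that $\widehat{T_\sigma f}(\xi_n,\eta_n,\lambda_n)=\sigma(\xi_n,\eta_n,\lambda_n)$, whose norm is controlled by $\|\sigma\|_{op}$ rather than by $\|\sigma\|_{inf}$; the claimed rapid decay of $\widehat{T_\sigma f}$ therefore does not follow from the hypothesis $\|\sigma(\xi_n,\eta_n,\lambda_n)\|_{inf}\leq\|(\xi_n,\eta_n,\lambda_n)\|_p^{-m_n}$ alone unless one separately bounds $\|\sigma\|_{op}$. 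Your choice of the rank-one projection $u\mapsto\langle u,v_j\rangle v_j$ onto the minimizing unit vector gives $\|\sigma(\xi_j,\eta_j,\lambda_j)\widehat{f}(\xi_j,\eta_j,\lambda_j)\|=\|\sigma(\xi_j,\eta_j,\lambda_j)\|_{inf}$ exactly, which is what the argument needs, and it also covers the degenerate case $\|\sigma\|_{inf}=0$ where the symbol is not invertible. The remaining ingredients (the multiplier identity on the Fourier side, the finite-dimensionality of $\mathcal{H}_{(\xi,\eta,\lambda)}$ giving attainment of the infimum and the identity $\|\sigma^{-1}\|_{op}=\|\sigma\|_{inf}^{-1}$, and the treatment of the finitely many low frequencies) match the paper's intent, so your write-up is correct and, at the counterexample step, more careful than the published one.
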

\begin{proof}
If $\| \sigma(\xi,\eta,\lambda)\|_{inf}^{-1} \lesssim \| (\xi,\eta,\lambda)\|_p^{-m}$ for large $\|(\xi,\eta,\lambda )\|_p$, then it is easy to see that $\widehat{f} (\xi,\eta,\lambda) = \sigma (\xi,\eta,\lambda)^{-1} \widehat{g}(\xi,\eta,\lambda)$, for large $\|(\xi,\eta,\lambda )\|_p$, so $f$ is a Schwartz function. For the reciprocal statement we proceed by contradiction. Assume that there is a sequence $\{(\xi_n,\eta_n,\lambda_n , m_n)  \}$ such that $m_n \to \infty,$  and $\| (\xi_n,\eta_n,\lambda_n)\|_p \to  \infty$ as $n \to \infty$, and $$ \| \sigma(\xi_n,\eta_n,\lambda_n)\|_{inf} \leq \| (\xi_n,\eta_n,\lambda_n) \|_p^{-m_n}.$$Also, define \[ \widehat{f} (\xi,\eta,\lambda) := \begin{cases}
    I_{d_{(\xi,\eta,\lambda)}} & \, \, \text{if} \, \, (\xi,\eta,\lambda) = (\xi_n,\eta_n,\lambda_n), \\ 0 & \, \, \text{in other case.} 
    \end{cases}\]Then $Tf \in \mathcal{S}(\mathbb{H}_d)$ but $f \in \mathcal{S}'(\mathbb{H}_d) \setminus \mathcal{S}(\mathbb{H}_d)$ contradicting the global hypoellipticity of $T_\sigma$. 

\end{proof}

The above proposition proves how the VT operator is a globally hypoelliptic operator, since its associated symbol $\sigma_{D^\alpha} (\xi , \eta , \lambda)$ is invertible and bounded below, for $\|(\xi , \eta , \lambda) \|_p$ large enough, though this is a trivial fact. For more general invariant operators, Lemma \ref{Teohypoellipticity} provides a simple condition in terms of the symbol, and we wish to exploit it to study some interesting operators on $\mathbb{H}_d$. We are particularly interested in the directional VT operators, and the polynomials in the directional VT operators. In general these operators are not simple to handle but, in some cases, one can manage to give some satisfactory information, and this will be the case for the following operators:

\begin{defi}\normalfont\label{defiHeiCompactSubLap}
Let $\mathfrak{h}_d = \mathrm{Span}_{\Z_p} \{X_1, ..., X_d, Y_1,..,Y_d,Z \}$ be the Heisenberg $\Z_p$-Lie algebra, and let us take any pair of collections of l.i. vectors $$\textbf{V}=\{V_1 ,..,V_d\} \subset \mathrm{Span}_{\Z_p} \{X_1, ..., X_d \}, \,\,\,\, \textbf{W}=\{W_1 ,..,W_d\} \subset \mathrm{Span}_{\Z_p} \{Y_1, ..., Y_d \}.$$Given $\alpha, \beta \in (\R^+)^d$, we define the \emph{Vladimirov sub-Laplacian associated to} $(\textbf{V}, \textbf{W}, \alpha , \beta)$, as the linear operator $$T_{\textbf{V}, \textbf{W}}^{\alpha , \beta} = \sum_{k=1}^d \partial_{V_k}^{\alpha_k} + \partial_{W_k}^{\beta_k}.$$Also, for $\gamma \in \R^+$ we define the Vladimirov Laplacian as  $$T_{\textbf{V}, \textbf{W}}^{\alpha , \beta, \gamma} = \sum_{k=1}^d \partial_{V_k}^{\alpha_k} + \partial_{W_k}^{\beta_k} + \partial_Z^\gamma .$$In particular, when $\alpha=\alpha_1 =...=\alpha_d=\beta_1 = ...=\beta_d >0,$ and $$\textbf{V}=\{X_1, ..., X_d \}, \, \, \, \,  \textbf{W}= \{Y_1, ..., Y_d \}, $$we will use the notations $$\mathscr{L}_{sub}^{\alpha}:= \sum_{k=1}^d \partial_{X_k}^{\alpha} + \partial_{Y_k}^{\alpha}, \, \, \, \mathscr{L}^{\alpha}:= \sum_{k=1}^d \partial_{X_k}^{\alpha} + \partial_{Y_k}^{\alpha} + \partial_{Z}^{\alpha}.$$    
\end{defi}

In order to study these operators, just like we did for for the Vladimirov-Taibleson operator, we want to calculate their associated symbols. Let us start with the symbols of the directional VT operators $\partial_{V_k}^{\alpha_k}, \partial_{W_k}^{\beta_k}$ and $ \partial_{Z}^{\alpha}$.
\begin{itemize}
    \item From the expression for the matrix coefficients, for $\partial_{W_k}^{\alpha}$ we have that its associated symbol $\sigma_{\partial_{W_k}^{\alpha}} (\xi, \eta, \lambda) = \partial_{W_k}^{\alpha} \pi_{(\xi, \eta , \lambda)}|_{(x,y,z)=(0,0,0)}$ is a diagonal matrix with entries  \[ \sigma_{\partial_{W_k}^{\alpha}} (\xi, \eta, \lambda)_{hh}  =\begin{cases}
 | W_k \cdot \eta |_p^\alpha - \frac{1 - p^{-1}}{1 - p^{-(\alpha +1)}}  \quad & \text{if} \quad | \lambda |_p = 1, \\
| W_k \cdot (\lambda  h + \eta) |_p^\alpha - \frac{1 - p^{-1}}{1 - p^{-(\alpha +1)}} \quad & \text{if} \quad | \lambda |_p >1.
\end{cases}
\]
    \item For $\partial_{V_k}^{\alpha}$ its associated symbol $\sigma_{\partial_{V_k}^{\alpha}} (\xi, \eta, \lambda) = \partial_{V_k}^{\alpha} \pi_{(\xi, \eta , \lambda)}|_{(x,y,z)=(0,0,0)}$ is a Toeplitz matrix with entries \[ \sigma_{\partial_{V_k}^{\alpha}} (\xi, \eta, \lambda)_{hh'}  =\begin{cases}
 | V_k \cdot \xi |_p^\alpha - \frac{1 - p^{-1}}{1 - p^{-(\alpha +1)}}  \quad & \text{if} \quad | \lambda |_p = 1, \\
\partial_{V_k}^\alpha (e^{2 \pi i \{\xi \cdot \}_p} \1_{h - h'})|_{u=0} \quad & \text{if} \quad | \lambda |_p >1.
\end{cases}
\]
\item The symbol of $\partial_{Z}^{\alpha}$ is the simplest, and it is given by the  matrix with entries \[ \sigma_{\partial_{Z}^{\alpha}} (\xi, \eta, \lambda)_{hh'}  =\begin{cases}
 0  \quad & \text{if} \quad | \lambda |_p = 1, \\
(|\lambda|_p^\alpha - \frac{1 - p^{-1}}{1 - p^{-(\alpha +1)}}) \delta_{h h'} \quad & \text{if} \quad | \lambda |_p >1.
\end{cases}
\]
\end{itemize}
In particular the standard basis of $\mathfrak{h}_d$ and for $\mathscr{L}^\alpha_{sub}$ we have:
\begin{itemize}
    \item For $\partial_{Y_k}^{\alpha}$ we have that its associated symbol $\sigma_{\partial_{Y_k}^{\alpha}} (\xi, \eta, \lambda) = \partial_{Y_k}^{\alpha} \pi_{(\xi, \eta , \lambda)}|_{(x,y,z)=(0,0,0)}$ is a diagonal matrix with entries  \[ \sigma_{\partial_{Y_k}^{\alpha}} (\xi, \eta, \lambda)_{hh}  =\begin{cases}
 | \eta_k |_p^\alpha - \frac{1 - p^{-1}}{1 - p^{-(\alpha +1)}}  \quad & \text{if} \quad | \lambda |_p = 1, \\
| \lambda  h_k + \eta_k |_p^\alpha - \frac{1 - p^{-1}}{1 - p^{-(\alpha +1)}} \quad & \text{if} \quad | \lambda |_p >1.
\end{cases}
\]
    \item For $\partial_{X_k}^{\alpha}$ its associated symbol $\sigma_{\partial_{X_k}^{\alpha}} (\xi, \eta, \lambda) = \partial_{X_k}^{\alpha} \pi_{(\xi, \eta , \lambda)}|_{(x,y,z)=(0,0,0)}$ is a Toeplitz matrix with entries \[ \sigma_{\partial_{X_k}^{\alpha}} (\xi, \eta, \lambda)_{hh'}  =\begin{cases}
 | \xi_k |_p^\alpha - \frac{1 - p^{-1}}{1 - p^{-(\alpha +1)}}  \quad & \text{if} \quad | \lambda |_p = 1, \\
\partial_{u_k}^\alpha (e^{2 \pi i \{\xi \cdot \}_p} \1_{h - h'})|_{u=0} \quad & \text{if} \quad | \lambda |_p >1.
\end{cases}
\]
\end{itemize}

Summing up, the symbol of the Vladimirov Sub-Laplacian $T_{\textbf{V}, \textbf{W}}^{\alpha , \beta}$ can be written as
\[ \sigma_{T_{\textbf{V}, \textbf{W}}^{\alpha , \beta}} (\xi, \eta, \lambda)_{hh'}  =\begin{cases}
 \sum_{k=1}^d | V_k \cdot \xi |_p^{\alpha_k} +| W_k \cdot  \eta |_p^{\beta_k} - 2d\frac{1 - p^{-1}}{1 - p^{-(\alpha +1)}}  \quad & \text{if} \quad | \lambda |_p = 1, \\
\big(  \sum_{k=1}^d \partial_{V_k}^{\alpha_k} + |W_k \cdot ( \lambda  h + \eta) |_p^{\beta_k} - d\frac{1 - p^{-1}}{1 - p^{-(\alpha +1)}} \big) (e^{2 \pi i \{\xi \cdot \}_p} \1_{h - h'})|_{u=0} \quad & \text{if} \quad | \lambda |_p >1.
\end{cases}
\]
In particular for $\mathscr{L}^\alpha_{sub}$:
\[ \sigma_{\mathscr{L}^\alpha_{sub}} (\xi, \eta, \lambda)_{hh'}  =\begin{cases}
 \sum_{k=1}^d | \xi_k |_p^\alpha +| \eta_k |_p^\alpha - 2d\frac{1 - p^{-1}}{1 - p^{-(\alpha +1)}}  \quad & \text{if} \quad | \lambda |_p = 1, \\
\big(  \sum_{k=1}^d \partial_{u_k}^\alpha + | \lambda  h_k + \eta_k |_p^\alpha - d\frac{1 - p^{-1}}{1 - p^{-(\alpha +1)}} \big) (e^{2 \pi i \{\xi \cdot \}_p} \1_{h - h'})|_{u=0} \quad & \text{if} \quad | \lambda |_p >1.
\end{cases}
\]
It is clear that $$\| (\xi , \eta ,\lambda) \|_p^{\min\{ \alpha_k, \beta_k\}}\lesssim |\sigma_{T_{\textbf{V}, \textbf{W}}^{\alpha , \beta}} (\xi, \eta, \lambda)|  \lesssim \| (\xi , \eta ,\lambda) \|_p^{\max\{ \alpha_k, \beta_k\}},$$and  $|\sigma_{\mathscr{L}^\alpha_{sub}} (\xi, \eta, \lambda)| \asymp \|(\xi , \eta , \lambda)\|_p^\alpha$ when $|\lambda|_p =1$. When $|\lambda|_p>1$, in order to estimate $\|\sigma_{T_{\textbf{V}, \textbf{W}}^{\alpha , \beta}} (\xi, \eta, \lambda)\|_{op}$ and $\|\sigma_{T_{\textbf{V}, \textbf{W}}^{\alpha , \beta}} (\xi, \eta, \lambda)\|_{inf}$, we only need to consider the action of $T_{\textbf{V}, \textbf{W}}^{\alpha , \beta}$ on each finite-dimensional sub-space $$\mathcal{V}_{(\xi , \eta, \lambda)}:= Span_\C \{ (\pi_{(\xi , \eta , \lambda)})_{hh'} \, : \, h,h' \in \Z_p^d / p^{-\vartheta(\lambda)} \Z_p^d \}.$$Notice how, for any function $f_h \in \mathcal{D}(\mathbb{H}_d)$ with the form $$f_{h'}(x,y,z) = |\lambda|_p^d \sum_{h \in \Z_p^d / p^{-\vartheta(\lambda)} \Z_p^d} \pi_{(\xi , \eta , \lambda)}(x,y,z)_{hh'} \widehat{f}(\xi , \eta , \lambda)_{h'h},$$ we have $$T_{\textbf{V}, \textbf{W}}^{\alpha , \beta} f_{h'} (x,y,z) = \big( \sum_{k=1}^d \partial_{V_k}^{\alpha_k} + |W_k \cdot ( \lambda  h' + \eta ) |_p^{\beta_k} - d\frac{1 - p^{-1}}{1 - p^{-(\alpha +1)}} \big)f_{h'}(x,y,z).$$
Let us define the sub-spaces 
$$\mathcal{V}_{(\xi , \eta, \lambda)}^{h'}:= Span_\C \{ (\pi_{(\xi , \eta , \lambda)})_{hh'} \, : \, h \in \Z_p^d / p^{-\vartheta(\lambda)} \Z_p^d \},$$so that $$\mathcal{V}_{(\xi , \eta, \lambda)} = \bigoplus_{h' \in  \Z_p^d / p^{-\vartheta(\lambda)} \Z_p^d} \mathcal{V}_{(\xi , \eta, \lambda)}^{h'} ,$$and we know that $T_{\textbf{V}, \textbf{W}}^{\alpha , \beta}$ acts on $\mathcal{V}_{(\xi , \eta, \lambda)}^{h'}$ as the operator $$\sum_{k=1}^d \partial_{V_k}^{\alpha_k} + |W_k \cdot ( \lambda  h + \eta ) |_p^{\beta_k} - d\frac{1 - p^{-1}}{1 - p^{-(\alpha +1)}}.$$
The space $\mathcal{V}^{h'}_{(\xi, \eta, \lambda)}$ is actually equal to the space $$e^{2 \pi i \{(\xi x + \eta y) + \lambda(z +  h' y) \}_p} \mathcal{D}_{n_\lambda}(\Z_p^d) = \{ e^{2 \pi i \{(\xi x + \eta y) + \lambda(z +  h' y) \}_p} g \, : \, g \in \mathcal{D}_{n_\lambda} (\Z_p^d)\},$$ so that, an alternative basis for the space $\mathcal{V}_{(\xi , \eta , \lambda)}^{h}$ could be $$\mathscr{e}_{(\xi, \eta, \lambda, \tau), } (x) = e^{2 \pi i \{((\tau+\xi) x + \eta y) + \lambda(z +  h' y) \}_p}, \, \, \, 1 \leq \| \tau \|_p \leq |\lambda|_p.$$This is actually a basis of eigenfunctions for $\sum_{k=1}^d \partial_{V_k}^{\alpha_k} + |W_k \cdot ( \lambda  h' + \eta ) |_p^{\beta_k} - d\frac{1 - p^{-1}}{1 - p^{-(\alpha +1)}},$ and the corresponding eigenvalues are given by $$\sum_{k=1}^d |V_k \cdot(\tau + \xi)|_p^{\alpha_k} + |W_k \cdot ( \lambda  h' + \eta ) |_p^{\beta_k} - 2d\frac{1 - p^{-1}}{1 - p^{-(\alpha +1)}},$$so that by taking the union over all $(\xi, \eta, \lambda) \in \widehat{\mathbb{H}}_d$, $h' \in \Z_p^d / p^{-\vartheta(\lambda)}\Z_p^d,$ $1 \leq \| \tau \|_p \leq |\lambda_p|,$ we obtain the full spectrum of $T_{\textbf{V}, \textbf{W}}^{\alpha , \beta}$.

Finally, in order to prove the global hypoellipticity of $T_{\textbf{V}, \textbf{W}}^{\alpha , \beta}$, it should be clear that \begin{align*}
    \| \sigma_{T_{\textbf{V}, \textbf{W}}^{\alpha , \beta}}(\xi , \eta, \lambda) \|_{op} &= \| T_{\textbf{V}, \textbf{W}}^{\alpha , \beta} |_{\mathcal{V}_{(\xi , \eta, \lambda)}} \|_{op}  = \max_{h'  \in \Z_p^d / p^{-\vartheta(\lambda)} \Z_p^d }  \| T_{\textbf{V}, \textbf{W}}^{\alpha , \beta} |_{\mathcal{V}_{(\xi , \eta, \lambda)}^{h'}} \|_{op},
\end{align*}
and $$\| \sigma_{T_{\textbf{V}, \textbf{W}}^{\alpha , \beta}}(\xi , \eta, \lambda) \|_{inf} = \| T_{\textbf{V}, \textbf{W}}^{\alpha , \beta} |_{\mathcal{V}_{(\xi , \eta, \lambda)}} \|_{inf}  = \min_{h'  \in \Z_p^d / p^{-\vartheta(\lambda)} \Z_p^d }  \| T_{\textbf{V}, \textbf{W}}^{\alpha , \beta} |_{\mathcal{V}_{(\xi , \eta, \lambda)}^{h'}} \|_{inf},$$so, we can conclude that \begin{align*}
    \| &\sigma_{T_{\textbf{V}, \textbf{W}}^{\alpha , \beta}}(\xi , \eta, \lambda) \|_{op} \\ & \leq \max_{h ' \in \Z_p^d / p^{-\vartheta(\lambda)} \Z_p^d }(\sum_{k=1}^d \max \{|\lambda_p|^{\alpha_k} , |V_k \cdot \xi|_p^{\alpha_k} \}   + | W_k \cdot (\lambda  h' + \eta) |_p^{\beta_k} - 2d\frac{1 - p^{-1}}{1 - p^{-(\alpha +1)}}) \\ & \lesssim \max \{|\lambda|_p^{\max\{\alpha_k\}}, \| \xi \|_p^{\max\{\alpha_k\}}, \| \lambda  h' + \eta \|_p^{\max\{\beta_k\}} \} \leq \|(\xi , \eta, \lambda) \|_p^{\max\{\alpha_k, \beta_k\}}.
\end{align*}
Similarly
\begin{align*}
    \| \sigma_{T_{\textbf{V}, \textbf{W}}^{\alpha , \beta}}(\xi , \eta, \lambda) \|_{inf} &\geq \min_{h'  \in \Z_p^d / p^{\mathfrak{o}(\lambda_k)} \Z_p^d } \| \xi  \|_p^{\alpha_k} + \sum_{k=1}^d | W_k \cdot ( \lambda  h' + \eta) |_p^{\beta_k} - \frac{1 - p^{-1}}{1 - p^{-(\alpha +1)}}  \\ & \gtrsim \min_{h \in \Z_p^d / p^{-\vartheta(\lambda)}\Z_p^d} \max \{ \| \xi  \|_p^{\min\{\alpha_k\}} , \| \lambda  h' + \eta \|_p^{\min\{\beta_k\}} \},
\end{align*}
which also implies in this way the estimate $$\min_{h' \in \Z_p^d / p^{-\vartheta(\lambda)}\Z_p^d} \max \{ \| \xi  \|_p^{\alpha} , \| \lambda  h' + \eta \|_p^{\alpha} \} \lesssim \| \sigma_{\mathscr{L}_{sub}^\alpha} (\xi , \eta, \lambda)\|_{inf} \leq \| \sigma_{\mathscr{L}_{sub}^\alpha} (\xi , \eta, \lambda)\|_{op} \lesssim \|(\xi , \eta, \lambda) \|_p^\alpha,   $$  
proving in this way the global hypoellipticity of $T_{\textbf{V}, \textbf{W}}^{\alpha , \beta}$ and $\mathscr{L}_{sub}^\alpha$. Actually, $T_{\textbf{V}, \textbf{W}}^{\alpha , \beta}$ and $\mathscr{L}^\alpha_{sub}$ belong to an important class of hypoelliptic operators, which we call here \emph{sub-elliptic operators}.

\begin{defi}\label{defiellipticandsubellipticsymbol}\normalfont
Let $\sigma$ be a symbol, and let $T_\sigma$ be its associated pseudo-differential operator.
\begin{itemize}
    \item We say that $\sigma$ is an \emph{elliptic symbol of order} $m \in \R$, if there are $C_1, C_2 >0$ and $m \in \R$ such that $$C_1 \| (\xi, \eta, \lambda) \|_p^m \leq \| \sigma((x,y,z) ,(\xi, \eta, \lambda)) \|_{inf} \leq  \| \sigma((x,y,z) ,(\xi, \eta, \lambda)) \|_{op} \leq C_2 \| (\xi, \eta, \lambda) \|_p^m .$$If there is an $0<\delta\leq m$ such that $$C_1 \| (\xi, \eta, \lambda) \|_p^{m-\delta} \leq \| \sigma((x,y,z) ,(\xi, \eta, \lambda)) \|_{inf} \leq  \| \sigma((x,y,z) ,(\xi, \eta, \lambda)) \|_{op} \leq C_2 \| (\xi, \eta, \lambda) \|_p^m ,$$ we say that $\sigma$ is a \emph{sub-elliptic symbol}.  
    \item We say that $T_\sigma \in \mathcal{L}(H_2^{s+m}(\mathbb{H}_d) , H_2^{s}(\mathbb{H}_d))$ is a \emph{globally elliptic operator of order} $m$, if $T_\sigma f \in H_2^{s}(\mathbb{H}_d) $ implies $f \in H_2^{s+m}(\mathbb{H}_d)$. We say that $T_\sigma \in \mathcal{L}(H_2^{s+m}(\mathbb{H}_d) , H_2^{s}(\mathbb{H}_d))$ is a \emph{globally sub-elliptic operator of order} $m$, if $T_\sigma f \in H_2^{s}(\mathbb{H}_d) $ implies $f \in H_2^{s+m-\delta}(\mathbb{H}_d)$, for some $0<\delta\leq m$. Here the $L^2$-based Sobolev space $H_2^{s}(\mathbb{H}_d)$, $s \in \R$, is defined as $$H_2^{s}(\mathbb{H}_d)=\{f \in L^2 (\mathbb{H}_d) \, : \, \mathbb{D}^s f \in L^2 (\mathbb{H}_d)\}.$$Equivalently, $f \in H_2^{s}(\mathbb{H}_d)$ if and only if $$\| f\|_{H_2^{s}(\mathbb{H}_d)}  = \Big( \sum_{(\xi, \eta, \lambda) \in \widehat{\mathbb{H}}_d} |\lambda|_p^d  \| (\xi , \eta, \lambda)\|_p^{2s} \| \widehat{f}(\xi , \eta, \lambda) \|_{HS}^2 \Big)^{1/2}<\infty.$$
\end{itemize}      
\end{defi}
\begin{rem}
    Notice how for invariant operators it is true that $T_\sigma$ is elliptic, or sub-elliptic, if and only if its symbol $\sigma$ is elliptic or sub-elliptic, respectively. Also, in both cases $T_\sigma$ defines a globally hypoelliptic operator.
\end{rem}
In the case of the Vladimirov Laplacian $T_{\textbf{V}, \textbf{W}}^{\alpha , \beta, \gamma} ,$ by taking $$\alpha_1=...=\alpha_d = \beta_1 = ... = \beta_d = \gamma = s>0,$$the estimates throughout this section show that
$$ \| (\xi, \eta, \lambda) \|_p^s \lesssim \| \sigma_{T_{\textbf{V}, \textbf{W}, Z}^{s}}(\xi, \eta, \lambda) \|_{inf} \leq  \| \sigma_{T_{\textbf{V}, \textbf{W},Z}^{s }}(\xi, \eta, \lambda) \|_{op} \lesssim  \| (\xi, \eta, \lambda) \|_p^s ,$$ because $\| (\xi , \eta, \lambda)\|_p^s \lesssim \max \{ \| \xi  \|_p^s , \| \lambda  h' + \eta \|_p^s \} + |\lambda|_p^s$, where $$T_{\textbf{V}, \textbf{W},Z}^{\alpha } : = \sum_{k=1}^d \partial_{V_k}^{\alpha} + \partial_{W_k}^{\alpha} + \partial_Z^\alpha .$$
This means that the Vladimirov Laplacian is an elliptic operator, and therefore an alternative description of the Sobolev space $H_2^{s}(\mathbb{H}_d)$ is $$H_2^{s}(\mathbb{H}_d) = \{f \in L^2(\mathbb{H}_d) \, : \, T_{\textbf{V}, \textbf{W},Z}^{\alpha }f \in L^2 (\mathbb{H}_d)  \}.$$ 

The results collected so far constitute the statement of Theorem \ref{TeoSpectrumSublaplacianHd}, whose proof is now completed.
\section{Relation with non-invariant vector fields on $\Z_p$ }
To conclude this paper, let us remark how the directional VT operator $\partial_{Y_i}^\alpha$ actually acts on functions $f \in L^2(\mathbb{H}_d) \cong L^2 (\Z_p^{2d+1})$ the same way as the directional VT operator in the direction of the vector field $$V_i (\mathbf{x}, \mathbf{y}, z)= e_{d+i} +  x_i e_{2d+1}, $$via the formula \begin{align*}
    \partial_{V_i}^\alpha f(\mathbf{x}, \mathbf{y},z) &= \int_{\Z_p} \frac{f((\mathbf{x}, \mathbf{y},z) + t V_i(\mathbf{x}, \mathbf{y},z)) - f(\mathbf{x}, \mathbf{y},z)}{|t|_p^{\alpha + 1}}dt \\&= \int_{\Z_p} \frac{f(\mathbf{x}, \mathbf{y} + t Y_{i},z + t x_i ) - f(\mathbf{x}, \mathbf{y},z)}{|t|_p^{\alpha + 1}}dt = \partial_{Y_i}^\alpha f(\mathbf{x} , \mathbf{y}, z). 
\end{align*}However, from this perspective the resulting operator is non-invariant, and its associated symbol, corresponding to the Fourier transform on $\Z_p^{2d+1}$ is going to be $$\sigma_{\partial_{V_i}^\alpha} (\gamma)= \int_{\Z_p} \frac{e^{2 \pi i \{ \gamma \cdot t (e_{d+i} +  x_i e_{2d+1})  \}_p } - 1}{|t|_p^{\alpha + 1}}dt=|\gamma \cdot (e_{d+i} +  x_i e_{2d+1})|_p^\alpha - \frac{1 - p^{-1}}{1 - p^{-(\alpha + 1)}}, \quad \gamma \in \Q_p^{2d + 1} / \Z_p^{2d+1}.$$So, the Vladimirov sub-Laplacian $\mathcal{L}_{sub}^\alpha$ acts on $\Z_p^{2d+ 1}$ like the, in principle non-invariant, operator $$\sum_{i=1}^d \partial_{u_i}^\alpha + \partial_{V_i}^\alpha, $$for which is possible to prove the global hypoellipticity on $\Z_p^{2d+1}$. However, what we showed here is how the group Fourier transform on $\mathbb{H}_d$ is much better to study this operator, because the operator becomes invariant, and tha ultimately allowed us to find a basis of eigenfunctions, which would actually be non trivial if we were working on $\Z_p^{2d +1}$ with the commutative structure.     
  
\nocite{*}
\bibliographystyle{acm}
\bibliography{main}
\Addresses

\end{document}